\providecommand*{\toclevel@titlech}{0} 
\edef\toclevel@authorch{\the\numexpr\toclevel@titlech+1} 
\newenvironment{enumeratea}
{\bgroup 
\begin{enumerate}}
{\end{enumerate}\egroup}
\newenvironment{enumeratei}
{\bgroup \begin{enumerate}}
{\end{enumerate}\egroup}
\spnewtheorem{theoremeA}{Theorem}{\bfseries}{\itshape}
\spnewtheorem{theoremeAh}{Theorem}{\bfseries}{\itshape}
\newcommand\cf{see\kern.3em}
\newcommand\Cf{See\kern.3em}
\newcommand\eg{e.g.\kern.3em}
\newcommand\ie{i.e.,\ }
\newcommand\loccit{loc.\kern3pt cit.{}\xspace}
\newcommand\resp{\text{resp.}\kern.3em}
\newcommand\moins{\smallsetminus}
\newcommand\cswt{\widetilde}
\let\ts\textstyle
\let\dpl\displaystyle
\newcommand{\lcr}{[\![}
\newcommand{\rcr}{]\!]}
\newcommand{\lpr}{(\!(}
\newcommand{\rpr}{)\!)}
\newcommand\cD{\mathcal{D}}
\newcommand\cF{\mathcal{F}}
\newcommand\cH{\mathcal{H}}
\newcommand\cI{\mathcal{I}}
\newcommand\cM{\mathcal{M}}
\newcommand\cO{\mathcal{O}}
\newcommand\cT{\mathcal{T}}
\def\CC{\mathbb{C}}
\def\PP{\mathbb{P}}
\def\ZZ{\mathbb{Z}}
\newcommand{\bH}{\boldsymbol{H}}
\newcommand{\bR}{\boldsymbol{R}}
\newcommand{\bbullet}{{\scriptscriptstyle\bullet}}
\newcommand{\cbbullet}{{\raisebox{1pt}{$\bbullet$}}}
\newcommand{\isom}{\stackrel{\ts\sim}{\longrightarrow}}
\newcommand{\an}{\mathrm{an}}
\newcommand{\coh}{\mathrm{coh}}
\newcommand{\dR}{{\mathrm{dR}}}
\newcommand{\loc}{{\mathrm{loc}}}
\newcommand{\rb}{{\mathrm{b}}}
\newcommand{\rc}{{\mathrm{c}}}
\newcommand{\rd}{{\mathrm{d}}}
\DeclareMathOperator{\coker}{coker}
\DeclareMathOperator{\DR}{DR}
\DeclareMathOperator{\gr}{gr}
\DeclareMathOperator{\rk}{rk}
\DeclareMathOperator{\Spec}{Spec}
\DeclareMathOperator{\MHM}{\mathsf{MHM}}
\DeclareMathOperator{\Tot}{Tot}
\let\ra\rightarrow
\renewcommand\to{\mathchoice{\longrightarrow}{\rightarrow}{\rightarrow}{\rightarrow}}
\newcommand\mto{\mathchoice{\longmapsto}{\mapsto}{\mapsto}{\mapsto}}
\newcommand\hto{\mathrel{\lhook\joinrel\to}}
\newcommand{\To}[1]{\mathchoice{\xrightarrow{\textstyle\kern4pt#1\kern3pt}}{\stackrel{#1}{\longrightarrow}}{}{}}
\newcommand{\catD}{\mathsf{D}}
\newcommand{\Mod}{\mathsf{Mod}}
\newcommand{\bun}{{\boldsymbol{1}}}
\newcommand{\Gm}{\mathbb{G}_\mathrm{m}}
\newcommand{\Afu}{\mathbb{A}^{\!1}}
\newcommand{\Afuan}{\mathbb{A}^{\!1\an}}
\newcommand{\Afnp}{\mathbb{A}^{\!n+1}}
\newcommand{\quand}{\quad\text{and}\quad}
\begin{document}

\title{Duality for Landau-Ginzburg models}

\author{Claude Sabbah}
\institute{Claude Sabbah \at{CMLS, CNRS, École polytechnique, Institut Polytechnique de Paris, 91128 Palaiseau cedex, France}
\email{Claude.Sabbah@polytechnique.edu}}


\motto{To the memory of Bumsig Kim}

\maketitle

\abstract{This article surveys various duality statements attached to a pair consisting of a smooth complex quasi-projective variety and a regular function on it.
\keywords{Twisted de~Rham complex, Kontsevich complex}
}

\abstract*{This article surveys various duality statements attached to a pair consisting of a smooth complex quasi-projective variety and a regular function on it.}

\section*{Contents}
\setcounter{minitocdepth}{2}
\dominitoc

\section{Introduction}

\subsection{Comparing cohomologies of complexes of differential forms}\label{subsec:comparing}
The origin of this note is a question of Bumsig Kim: given a regular function $f$ on a smooth connected complex quasi-projective variety $U$ of dimension $n$, to compare two kinds of cohomologies attached to $f$ together with their natural duality pairings:
\begin{itemize}
\item
the hypercohomology of the twisted de~Rham complex $(\Omega^\cbbullet_U,\rd+\rd f)$ together with a variant with compact support (to be defined since it is not a complex in $\Mod(\cO_U)$) and the natural pairing between them,
\item
the cohomology and cohomology with compact support of the complex $(\Omega^\cbbullet_U,\rd f)$, together with the Serre duality pairing between them (\cf\eg\cite{Hartshorne72}).
\end{itemize}

A first observation is that, although the first (hyper)cohomologies are finite dimensional, the second ones are not, unless some assumption on $f$ is added, \eg the critical set, which is the support of the cohomology sheaves $\cH^j(\Omega^\cbbullet_U,\rd f)$, is compact (recall that it is anyway contained in a finite number of fibers of $f$). This leads us to choose a good projectivization of $(U,f)$ as a projective morphism $f:Y\to\Afu$, so that $Y$ is smooth quasi-projective and $H:=Y\moins U$ is a divisor with normal crossings, and to consider suitable cohomologies on $Y$.

Furthermore, one way to compare them is to introduce a parameter $u$ and to consider the twisted de~Rham complex $(\Omega^\cbbullet_U[u],u\rd+\rd f)$, where, all along this note, the notation $[u]$ means the tensor product $\otimes_{\CC}\CC[u]$. The main question is then whether the hypercohomology $\bH^k(U,(\Omega^\cbbullet_U[u],u\rd+\rd f))$ of the latter complex is a free $\CC[u]$-module of finite rank.

\begin{enumeratei}
\item\label{enum:a}
This property does not hold in general, as shown by the following simple example. Let $U=\Afu$ with coordinate $t$ and set $f=0$. Then
\[
H^1(U,u\rd+\rd f)=\coker\Bigl[\CC[t,u]\To{u\partial_t}\CC[t,u]\Bigr]
\]
and this $\CC[u]$-module is identified with the $\CC$-vector space $\CC[t]$ on which $u$ acts by zero, hence is not of finite type.

\item\label{enum:b}
This property holds if $f$ is \emph{proper}, but we are mainly interested in cases where~$f$ is not proper.

\item\label{enum:c}
This property holds if we replace $\CC[u]$ with the ring of Laurent polynomials $\CC[u,u^{-1}]$, meaning that the problem of comparison only occurs at the origin $u=0$ (\cf \eqref{eq:pairuuum}).

\item\label{enum:e}
As indicated above, a way to prevent the infinite dimensionality at $u=0$ is to replace the twisted de~Rham complex with parameter $u$ by a subcomplex that solves the comparison. This can be done by choosing a good projectivization $(Y,f)$ as above and by considering the twisted logarithmic complex $(\Omega^\cbbullet_Y(\log H)[u],u\rd+\rd f)$ with parameter $u$ (a meromorphic version of this complex, called the de~Rham complex of the Brieskorn lattice, is considered in \cite[\S8]{S-Y14}). This will be our starting point.

The abutment of the pairing we look for is obtained by means of a smooth projectivization $X$ of $Y$ such that $f$ extends as a morphism $f:X\to\PP^1$ and $D=X\moins U$ is a normal crossing divisor. The de~Rham hypercohomologies $\bH^k(X,(\Omega_X^\cbbullet,\rd))$ and $\bH^k(X,(\Omega_X^\cbbullet,0))$ have the same dimension due to Hodge degeneration. It follows that the hypercohomology of the complex $(\Omega_X^\cbbullet[u],u\rd)$ is $\CC[u]$-free of finite rank: its has finite type over $\CC[u]$ and for any $u_o\in\CC$, the dimension of the hypercohomology of $(\Omega_X^\cbbullet,u_o\rd)$ is independent of $u_o$, so that the assertion follows from Lemma~\ref{lem:basicu} below. We~can then identify $\bH^{2n}(X,(\Omega_X^\cbbullet[u],u\rd))$ with
\[
H^{2n}_\dR(X,(\Omega_X^\cbbullet,\rd))\otimes\CC[u]\simeq H^n(X,\Omega_X^n)\otimes\CC[u]\simeq\CC[u].
\]

\begin{theoremeA}\label{th:A}
The $\CC[u]$-modules
\[
\bH^k\bigl(Y,(\Omega_Y^\cbbullet(\log H)[u],u\rd+\rd f)\bigr)\text{ and } \bH^k\bigl(Y,(\Omega_Y^\cbbullet(\log H)(-H)[u],u\rd-\rd f)\bigr),
\]
are $\CC[u]$-free of finite rank, and equipped with a meromorphic connection having a pole of order at most~$2$ at $u=0$, a regular singularity at infinity and no other pole. Furthermore, there is a natural perfect pairing
\begin{multline}\label{eq:pairY}
\bH^{n+k}\bigl(Y,(\Omega_Y^\cbbullet(\log H)[u],u\rd+\rd f)\bigr)\\\otimes_{\CC[u]} \bH^{n-k}\bigl(Y,(\Omega_Y^\cbbullet(\log H)(-H)[u],u\rd-\rd f)\bigr)
\to\CC[u],
\end{multline}
which is compatible with the connections. All these objects are independent of the choice of the good projectivization $(Y,f)$ of $(U,f)$.
\end{theoremeA}

Note that the freeness of $\bH^k\bigl(Y,(\Omega_Y^\cbbullet(\log H)[u],u\rd+\rd f)\bigr)$ also follows from a variant of the Barannikov-Kontsevich theorem \cite[\S0.6]{Bibi97b}. The independence on the choice of the good projectivization follows from \cite{Yu12} and \cite[Prop.\,2.3]{C-Y16}.

Let us emphasize that, by restricting modulo $u\CC[u]$ according to Lemma \ref{lem:basicu} below, we find that the perfect pairing
\[
\bH^{n+k}(Y,(\Omega^\cbbullet_Y(\log H),\rd f))\otimes_{\CC}\bH^{n-k}(Y,(\Omega^\cbbullet_Y(\log H)(-H),-\rd f))\to\CC
\]
coincides with Serre's duality pairing as constructed in \cite{Hartshorne72} since the complexes involved are compactly supported (they are supported on the union of the closures of the critical locus of the restriction of $f$ to each stratum of $(Y,H)$, hence in a finite number of fibers of $f$).

If $f:U\to\Afu$ is proper, the divisor $H$ is empty and $U=Y$, so this theorem justifies the assertion \eqref{enum:b}. If on the other hand $f$ satisfies a tameness condition (\cf Definition \ref{def:tame} below), then the twist $(-H)$ on the second term of \eqref{eq:pairY} can be omitted (\cf Corollary \ref{cor:tame}).

\item\label{enum:f}
One can give a $\cD$-module-theoretic interpretation of the previous results (\cf Section \ref{sec:RF}). Letting $\cD_U$ denote the ring of algebraic differential operators on $U$ with its filtration~$F_\bbullet$ by the order, we consider the Rees ring $R_F\cD_U=\bigoplus_{k\geq0}F_k\cD_U\cdot u^k$. We~regard $(\cO_U[u],u\rd+\rd f)$ as a coherent $R_F\cD_U$-module and $\bH^k(U,(\Omega^\cbbullet_U[u],u\rd+\rd f))$ as isomorphic to the de~Rham cohomology of this $R_F\cD_U$-module. Although the ring $R_F\cD_U$ has properties similar to those of $\cD_U$, it does not satisfy Bernstein's inequality because of the possible occurrence of $u$-torsion, and this prevents us to apply Bernstein's results to deduce finiteness of the de~Rham cohomology.
\end{enumeratei}

For \eqref{enum:c}, we first notice that the second term in \eqref{eq:pairY} plays the role of hypercohomology with compact support. In order to give a meaning to this remark, it~is worthwhile working with modules over the ring of differential operators. More precisely, we consider the ring $\cD_U[u,u^{-1}]=\cD_U\otimes_{\CC}\CC[u,u^{-1}]$ of algebraic differential operator on $U$ with coefficients in $\cO_U[u,u^{-1}]$, so that the base ring is $\CC[u,u^{-1}]$ instead of the field $\CC$ (it would be equivalent to consider differential operators on $U\times\Gm$ relative to the projection to $\Gm$). We~denote by $E_U^{f/u}$ the left $\cD_U[u,u^{-1}]$-module $(\cO_U[u,u^{-1}],\rd+\rd f/u)$.

We consider the two extensions in the sense of $\cD$-modules, denoted $E_Y^{f/u}(*H)$ and $E_Y^{f/u}(!H)$ of $E_U^{f/u}$ by the open inclusion $U\hto Y$, corresponding to the full extension and the extension with ``proper support'' (\cf Section \ref{sec:pfB} for details). Note that $E_Y^{f/u}$ also exists. We~will show that, for any $k\in\ZZ$, both $\CC[u,u^{-1}]$-modules $H^k_\dR(Y,E_Y^{f/u}(*H))$ and $H^k_\dR(Y,E_Y^{f/u}(!H))$ are free $\CC[u,u^{-1}]$-modules of finite rank equipped with a connection having a regular singularity at $u=\infty$, that we also denote by $H^k_\dR(U,E_U^{f/u})$ and $H^k_{\dR,\rc}(U,E_U^{f/u})$ respectively. By definition of $E_Y^{f/u}(*H)$, we have
\[
H^k_\dR(U,E_U^{f/u})=\bH^k(U,(\Omega^\cbbullet_U[u,u^{-1}],\rd+\rd f/u)).
\]
On the other hand, we will see the isomorphism in \eqref{eq:inductive**}:
\begin{multline*}
\bH^k\bigl(Y,(\Omega_Y^\cbbullet(\log H)[u,u^{-1}],\rd+\rd f/u)\\\simeq\bH^k\bigl(Y,(\Omega_Y^\cbbullet(\log H)[u],\rd+\rd f/u)\otimes_{\CC[u]}\CC[u,u^{-1}],
\end{multline*}
compatible with the connections, and similarly after twisting the complexes by $(-H)$.

\begin{theoremeA}\label{th:B}
Restriction to $U$ induces isomorphisms of free $\CC[u,u^{-1}]$-modules of finite rank with connection
\begin{align*}
\bH^{n+k}\bigl(Y,(\Omega_Y^\cbbullet(\log H)[u,u^{-1}],\rd+\rd f/u)\bigr)&\isom H^k_\dR(U,E_U^{f/u})\tag{B$*$}\label{eq:B*}\\
\bH^{n-k}\bigl(Y,(\Omega_Y^\cbbullet(\log H)(-H)[u,u^{-1}],\rd-\rd f/u)\bigr)&\isom H^{-k}_{\dR,\rc}(U,E_U^{-f/u}),\tag{B$!$}\label{eq:B!}
\end{align*}
giving rise to a perfect pairing compatible with the connections, by means of \eqref{eq:pairY}:
\begin{equation}\label{eq:pairuuum}
H^k_\dR(U,E_U^{f/u})\otimes_{\CC[u,u^{-1}]}H^{-k}_{\dR,\rc}(U,E_U^{-f/u})\to \CC[u,u^{-1}].
\end{equation}
\end{theoremeA}

This theorem makes clear the independence of the choice of the good projectivization $(Y,f)$ in Theorem \ref{th:A} if we restrict to $\Gm=\Spec\CC[u,u^{-1}]$. It is similar to \cite[Th.\,3]{Wei17}, where mixed Hodge modules are considered (\cf also \cite{E-S18}). Note that a perfect pairing like that of Theorem~\ref{th:B} can be obtained by means of $\cD$-module theory (\cf\cite[App.\,2]{Malgrange91}) but we will not compare these two ways of defining a perfect pairing. Theorem \ref{th:A} provides each term in \eqref{eq:B*} and \eqref{eq:B!} of a canonical lattice, \ie a free $\CC[u]$-submodule which has the same rank as the corresponding $\CC[u,u^{-1}]$-module, on which the connection has a pole of order two. It is a generalization of the \emph{Brieskorn lattice} of singularity theory (\cite{Brieskorn70,Pham85b}).

\begin{remark}
Such twisted de~Rham complexes have been considered from many points of view and their cohomology sometimes takes the name of Dwork cohomology. Also, since $\rd f$ is the main object used to define the twisted de~Rham complex, one could consider a pair $(U,\omega)$, where $\omega$ is any closed regular $1$-form, instead of a pair $(U,f)$.  In addition to the references used in this text, let us mention only a few articles representing other directions of research:
\cite{A-S97}, \cite{D-M-S-S99}, \cite{B-D04}, \cite{Hertling05}, \cite{S-T08}, \cite{Fan11}, \cite{L-W22} and \cite{Arapura97}, \cite[\S2.4]{Bibi13b}.
\end{remark}

\subsection{Formalization with respect to \texorpdfstring{$u$}{u}}
If the critical set of $f$ in $U$ is assumed to be compact, then the cohomology $\bH^k(U,(\Omega_U^\cbbullet,\rd f))$ is finite-dimensional since the complex of coherent sheaves is supported on this critical set. One would naturally expect that, consequently, $\bH^k(U,(\Omega^\cbbullet_U[u],u\rd+\rd f))$ is of finite type over $\CC[u]$. We~do not know whether this property holds. However, as shown in Theorem \ref{th:Ch} below, it~holds if we replace $\CC[u]$ with the ring $\CC\lcr u\rcr$ of formal power series in a suitable way, under the following condition:\footnote{Added on May 2023: T.\,Mochizuki has sent me a proof assuming only compactness of the critical set and which extends to the case where $U$ is complex analytic and Kähler, and $f$ holomorphic.}

\smallskip\noindent\refstepcounter{equation}\label{cond:b}
\eqref{cond:b} 
There exists a projectivization $f_Z:Z\to\Afu$ of $f:U\to\Afu$ with $Z$ smooth (no other assumption on $Z\moins U$), such that the critical set of $f_Z$ is contained in $U$ (in~particular, it is compact).

\smallskip
Some care has to be taken when working with formal power series. For a coherent $\cO_U$-module $\cF$, we have $\cF[u]:=\CC[u]\otimes_{\CC}\cF$ and we set $\cF\lcr u\rcr=\varprojlim_\ell(\cF[u]/u^\ell\cF[u])$. This is in general not equal to $\CC\lcr u\rcr\otimes_{\CC}\cF$ and for $x\in U$ we have a strict inclusion $\cF\lcr u\rcr_x\subsetneq \cF_x\lcr u\rcr$: a germ of section of $\cF\lcr u\rcr$ at $x\in U$ consists of a formal power series $\sum_nf_nu^n$ where $f_n$ are sections of $\cF$ defined on a fixed neighbourhood of $x$, while for $\cF_x\lcr u\rcr$ we allow the neighbourhood to be shrunk when $n\to\infty$. In particular, there is a natural morphism $\CC\lcr u\rcr\otimes_{\CC}\cF\to\cF\lcr u\rcr$. We~then set $\cF\lpr u\rpr:=\cF\lcr u\rcr[u^{-1}]=\CC\lpr u\rpr\otimes_{\CC\lcr u\rcr}\cF\lcr u\rcr$.

\begin{theoremeAh}\label{th:Ah}
The statement of Theorem \ref{th:A} holds if we replace everywhere $[u]$ with~$\lcr u\rcr$, and the free $\CC\lcr u\rcr$-modules and pairings are obtained from those of Theorem \ref{th:A} by tensoring with $\CC\lcr u\rcr$ over $\CC[u]$.
\end{theoremeAh}

\refstepcounter{theoremeAh}
\begin{theoremeAh}\label{th:Ch}
Under Condition \eqref{cond:b}, the $\CC\lcr u\rcr$-modules
\[
\bH^k\bigl(U,(\Omega_U^\cbbullet\lcr u\rcr,u\rd+\rd f)\bigr)
\]
are $\CC\lcr u\rcr$\nobreakdash-free, coincide via the restriction morphism with the corresponding~$\CC\lcr u\rcr$\nobreakdash-mod\-ules in Theorem \ref{th:Ah}, and the pairing \eqref{eq:pairY} induces a perfect pairing
\begin{equation}\label{eq:pairC}
\bH^{n+k}\bigl(U,(\Omega_U^\cbbullet\lcr u\rcr,u\rd+\rd f)\bigr)\otimes_{\CC[u]} \bH^{n-k}\bigl(U,(\Omega_U^\cbbullet\lcr u\rcr,u\rd-\rd f)\bigr)
\to \CC\lcr u\rcr
\end{equation}
which itself induces, by working modulo $u\CC\lcr u\rcr$, the Serre duality pairing
\[
\bH^{n+k}\bigl(U,(\Omega_U^\cbbullet,\rd f)\bigr)\otimes_{\CC} \bH^{n-k}\bigl(U,(\Omega_U^\cbbullet,-\rd f)\bigr)
\to\CC.
\]
\end{theoremeAh}

\Cf Proposition \ref{prop:C} for a more precise result. The way \eqref{eq:pairY} induces \eqref{eq:pairC} will be explained in detail in Section \ref{sec:pfC}.

\begin{remark}
The pairing \eqref{eq:pairC} can be regarded as a global version (with respect to~$U$) of K.\,Saito's higher residue pairings \cite{KSaito83} for a germ of holomorphic function with an isolated singularity.
\end{remark}

\subsection{Setting and notation}\label{subsec:nota}
Let $U$ be a smooth connected quasi-projective variety of dimension $n$ and let $f\in\cO(U)$ be any regular function on $U$, that we regard as a morphism $f:U\to\Afu$. Let $u$ be a new variable. We~consider the \emph{twisted de~Rham complex} $(\Omega_U^\cbbullet[u],u\rd+\rd f)$, with $\Omega_U^\cbbullet[u]:=\Omega_U^\cbbullet\otimes_{\CC}\CC[u]$, whose hypercohomology on $U$ is \hbox{$\bH^k(U,(\Omega^\cbbullet_U[u],u\rd+\rd f))$}. We~sometimes make use of the isomorphic subcomplex $(u^{-\cbbullet}\Omega_U^\cbbullet[u],\rd+\rd f/u)$ of $(\Omega_U^\cbbullet[u,u^{-1}],\rd+\rd f/u)$, the isomorphism being obtained by multiplying the degree $k$ term by $u^{-k}$. We~will also define the cohomology with compact support $\bH^k_\rc(U,(\Omega^\cbbullet_U[u],u\rd+\rd f))\simeq\bH^k_\rc(U,(u^{-\cbbullet}\Omega^\cbbullet_U(-D)[u],\rd+\rd f/u))$.

It is convenient to choose a \emph{good projectivization} of $(U,f)$, namely, a pair $(X,f)$ consisting smooth projective variety $X$ containing $U$ as a Zariski open subset and such that
\begin{enumeratea}
\item
$D:=X\moins U$ is a normal crossing divisor in $X$,
\item
$f:U\to\Afu$ extends as a morphism $f:X\to\PP^1$.
\end{enumeratea}
We then set $P=f^*(\infty)$ (the pole divisor of $f$), we denote by $|P|$ its support, which is contained in $D$, and we decompose $D=|P|\cup H$, where $H$ is some normal crossing divisor in $X$ having no irreducible component contained in $|P|$. We~let $j:U:=X\moins D\hto X$ denote the open inclusion. We~note that $f$ induces a projective morphism $f:X\moins|P|=:Y\to\Afu$. We~can also regard $f$ as a global section of~$\cO_X(*D)$.  We will keep the notation~$H$ for $H\cap Y$.

The following lemma will be of constant use.

\begin{lemma}\label{lem:basicu}
Let $K^\cbbullet_u$ be a bounded complex of sheaves on $X$ of $\CC[u]$-modules such that $\bH^j(X,K^\cbbullet_u)$ has finite type over $\CC[u]$ for every $j$. Then the following properties are equivalent:
\begin{enumerate}
\item\label{lem:basicu1}
For every $u_o\in\CC$ and every $j$, $\dim \bH^j(X,K^\cbbullet_u/(u-u_o)K^\cbbullet_u)$ is independent of~$u_o$.
\item\label{lem:basicu2}
For every $j$, $\bH^j(X,K^\cbbullet_u)$ is a free $\CC[u]$-module.\end{enumerate}
In such a case, for every $u_o\in\CC$ and every $j$, we have
\begin{equation}\tag{\ref{lem:basicu}$\,*$}\label{eq:basicu}
\bH^j(X,K^\cbbullet_u/(u-u_o)K^\cbbullet_u)=\bH^j(X,K^\cbbullet_u)/(u-u_o)\bH^j(X,K^\cbbullet_u).
\end{equation}
Furthermore, for any morphism $\varphi:K^\cbbullet_u\to L^\cbbullet_u$ between two such complexes satisfying~\eqref{lem:basicu1} or \eqref{lem:basicu2}, if the induced morphism
\[
\bH^j(X,K^\cbbullet_u/(u-u_o)K^\cbbullet_u)\to \bH^j(X,L^\cbbullet_u/(u-u_o)L^\cbbullet_u)
\]
is an isomorphism for any $u_o\in\CC$ and any $j\in\ZZ$, then $\bR\Gamma(X,\varphi)$ is a quasi-isomorphism, that is,
\[
\bH^j(X,\varphi):\bH^j(X,K^\cbbullet_u)\to\bH^j(X,L^\cbbullet_u)
\]
is an isomorphism of free $\CC[u]$-modules for any $j\in\ZZ$.
\end{lemma}

\begin{proof}
Let $\CC[u]_\loc$ be a localization of $\CC[u]$ such that $\CC[u]_\loc\otimes_{\CC[u]}\bH^j(X,K^\cbbullet_u)$ is $\CC[u]_\loc$-free for every $j$. If $u_o$ is a closed point of $\Spec\CC[u]_\loc$, the maps $(u-u_o)$ in the long exact sequence
\[
\cdots \bH^j(X,K^\cbbullet_u{})\To{u-u_o}\bH^j(X,K^\cbbullet_u{})\to \bH^j(X,K^\cbbullet_u/(u-u_o)K^\cbbullet_u)\to\cdots
\]
are all injective. By decreasing induction on $j$, one identifies $\bH^j(X,K^\cbbullet_u/(u-u_o)K^\cbbullet_u)$ with $\bH^j(X,K^\cbbullet_u)/(u-u_o)\bH^j(X,K^\cbbullet_u)$ for all $j$, \ie that \eqref{eq:basicu} holds for such an~$u_o$.

If \eqref{lem:basicu2} holds, then the above property holds for any $u_o\in\CC$ and the dimension of $\bH^j(X,K^\cbbullet_u/(u-u_o)K^\cbbullet_u)$ is constant and equal to the rank of $\bH^j(X,K^\cbbullet_u)$, so that \eqref{lem:basicu1} also holds.

Assume now that \eqref{lem:basicu1} holds. We~argue by decreasing induction on~$j$. Assume that $u-u_o:\bH^{j+1}(X,K^\cbbullet_u)\to\bH^{j+1}(X,K^\cbbullet_u)$ is injective for any $u_o$. Then the exact sequence above implies that  \eqref{eq:basicu} holds in degree $j$ for any $u_o$. Since the dimension is independent of $u_o$, the $\CC[u]$-module $\bH^j(X,K^\cbbullet_u)$ is $\CC[u]$-free, so that $u-u_o$ is injective on it for any $u_o\in\CC$ and we conclude by induction since $\bH^k(X,K^\cbbullet_u)=0$ for $k\gg0$.

For the last assertion, the assumption and the first part imply that $\varphi$ induces an isomorphism $\bH^j(X,K^\cbbullet_u)/(u-u_o)\bH^j(X,K^\cbbullet_u)\isom\bH^j(X,L^\cbbullet_u)/(u-u_o)\bH^j(X,L^\cbbullet_u)$ for any $u_o\in\CC$. We~conclude by applying a variant of Nakayama's lemma: if a morphism between free $\CC[u]$-modules of finite rank induces an isomorphism after restriction to any $u_o\in\CC$, then it is an isomorphism.\qed
\end{proof}

\section{Freeness and duality for the Kontsevich complexes}\label{sec:Kontsevich}
Before considering the hypercohomology $\bH^k(U,(\Omega^\cbbullet_U[u],u\rd+\rd f))$, it is useful to gather some properties of a variant of this de~Rham cohomology where the computation is made on the projective variety $X$ and the terms of the de~Rham complexes are $\cO_X$-coherent. If $f=0$, this amounts to computing the hypercohomology of the logarithmic de~Rham complex instead of that of the meromorphic de~Rham complex on $X$ (\cf Notation of Section \ref{subsec:nota}).

\subsection{Kontsevich complexes \textup{\cite{K-K-P14}}}

For $k\geq0$, we set
\[
\Omega^k_f=\{\omega\in\Omega_X^k(\log D)\mid\rd f\wedge\omega\in\Omega_X^{k+1}(\log D)\}.
\]
Since $\rd$ sends $\Omega_X^k(\log D)$ to $\Omega_X^{k+1}(\log D)$, we obtain the \emph{Kontsevich complex}
\[
(\Omega^\cbbullet_f,\rd+\rd f)\qquad\text{(denoted $\Omega^\cbbullet_X(\log D,f)$ in \cite{K-K-P14})}.
\]
We will also consider the twisted complex $(\Omega^\cbbullet_f(-|P]),\rd+\rd f)$, whose terms are $\Omega^k_f(-|P|)$.

One can equip these complexes with the decreasing filtration $\sigma^{\cbbullet}$ by the stupidly truncated subcomplexes. The inclusion of filtered complexes
\[
(\Omega^\cbbullet_f(-|P|),\rd+\rd f,\sigma^{\cbbullet})\hto(\Omega^\cbbullet_f,\rd+\rd f,\sigma^{\cbbullet})
\]
and
\[
(\Omega^\cbbullet_f(-D),\rd+\rd f,\sigma^{\cbbullet})\hto(\Omega^\cbbullet_f(-H),\rd+\rd f,\sigma^{\cbbullet})
\]
are filtered quasi-isomorphisms (\cf\cite[Prop.\,1.4.2]{E-S-Y13} and \cite[Proof of Lem.\,2.12]{K-K-P14}).

The local computation of $\Omega^k_f$ (\cf\cite[(1.3.1)]{E-S-Y13}) shows that the wedge product
\begin{equation}\label{eq:dualomegaf}
\Omega^k_f\otimes_{\cO_X}\Omega^{n-k}_f(-D)\to\Omega^n_X(\log D)(-D)=\Omega^n_X
\end{equation}
is a perfect pairing. It also induces a pairing of complexes
\begin{equation}\label{eq:pairing}
(\Omega^\cbbullet_f,\rd+\rd f)\otimes_{\CC}(\Omega^\bbullet_f(-D),\rd-\rd f)\to(\Omega^\cbbullet_X,\rd),
\end{equation}
where the termwise product is induced by
\[
\Omega_X^k(\log D)\otimes\Omega_X^\ell(\log D)(-D)\to\Omega_X^{k+\ell}(\log D)(-D)\hto\Omega_X^{k+\ell}.
\]

\begin{proposition}[J.-D.\,Yu \cite{Yu12}]\label{prop:Kontsevichperfect}
The corresponding cohomological pairing
\[
\bH^{n+k}\bigl(X,(\Omega^\cbbullet_f,\rd+\rd f)\bigr)\otimes_{\CC}\bH^{n-k}\bigl(X,(\Omega^\cbbullet_f(-D),\rd-\rd f)\bigr)\to H^{2n}_{\dR}(X)
\]
is perfect.
\end{proposition}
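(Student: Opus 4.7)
The plan is to deduce perfectness from the termwise Serre duality contained in \eqref{eq:dualomegaf}, combined with the degeneration at $E_1$ of the stupid-filtration (\emph{Hodge-to-de~Rham}) spectral sequences attached to the two Kontsevich complexes.

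First I would observe that each $\Omega^k_f$ is $\cO_X$-locally free of finite rank (this is visible from the local model of $\Omega^\cbbullet_f$ given in \cite[(1.3.1)]{E-S-Y13}), and that the perfect $\cO_X$-bilinear pairing \eqref{eq:dualomegaf} identifies $\Omega^{n-k}_f(-D)$ with $\mathcal{H}om_{\cO_X}(\Omega^k_f,\Omega^n_X)$. Grothendieck-Serre duality on the smooth projective variety $X$ then produces, for every $(p,q)$, a perfect pairing
\[
H^q\bigl(X,\Omega^p_f\bigr)\otimes_\CC H^{n-q}\bigl(X,\Omega^{n-p}_f(-D)\bigr)\to H^n(X,\Omega^n_X)\simeq\CC.
\]

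Next I would equip the two Kontsevich complexes, and the target complex $(\Omega^\cbbullet_X,\rd)$, with their stupid filtrations $\sigma^\cbbullet$. Because the wedge product carries $\sigma^p\otimes\sigma^q$ into $\sigma^{p+q}$, the pairing of complexes \eqref{eq:pairing} is a morphism of filtered complexes. It therefore induces a pairing of the associated spectral sequences whose $E_1$-term is precisely the Serre duality pairing displayed above, and which is in particular perfect on $E_1$. The crucial external input at this point is the degeneration at $E_1$ of both Kontsevich spectral sequences—a Barannikov-Kontsevich-type statement proved for these complexes in \cite{E-S-Y13} (and, in a slightly different form, in \cite{Yu12}). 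Granting this, a standard filtration argument (perfectness on the associated graded of a filtered duality pairing implies perfectness on the abutment when both filtrations degenerate) upgrades the $E_1$-pairing to a perfect pairing of the abutment hypercohomologies. The target is the $(n,n)$-piece of the Hodge filtration on $H^{2n}_\dR(X)$, namely $H^n(X,\Omega^n_X)\simeq\CC$, using the classical Hodge-to-de~Rham degeneration on~$X$ to identify the induced filtration on $H^{2n}_\dR(X)$.

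The main obstacle I anticipate is the bookkeeping required to check, with correct signs, that the induced pairing of spectral sequences really coincides with Serre duality at the $E_1$-page. This is the reason one must work with the stupid filtration rather than a Hodge-type filtration: $\rd+\rd f$ is not $\cO_X$-linear, and only $\sigma^\cbbullet$ yields an $\cO_X$-linear $E_1$-page on which Grothendieck-Serre is directly applicable. A secondary point is to verify that the termwise pairing \eqref{eq:dualomegaf} is actually a pairing of complexes, i.e.\ compatible with the differentials $\rd+\rd f$ and $\rd-\rd f$; this is a short computation using $\rd f\wedge\alpha\wedge\beta-(-1)^{|\alpha|}\alpha\wedge\rd f\wedge\beta=0$.
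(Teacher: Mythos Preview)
Your argument is correct, but it takes a somewhat heavier route than the paper's. The paper isolates a purely formal five-lemma statement (Lemma~\ref{lem:Yu}, due to Yu): if a pairing of bounded filtered complexes $A^\cbbullet\otimes B^\cbbullet\to(\Omega_X^\cbbullet,\rd)$ satisfies the annihilation condition $F^pA^\cbbullet\cdot F^{n+1-p}B^\cbbullet=0$ and induces a perfect pairing $\bH^{n+k}(X,\gr_F^pA^\cbbullet)\otimes\bH^{n-k}(X,\gr_F^{n-p}B^\cbbullet)\to H^{2n}_\dR(X)$ for every $p$, then the pairing on hypercohomology is perfect. The proof is a decreasing induction on $p$ using the long exact sequences of $0\to F^{p+1}\to F^p\to\gr^p\to0$ (and the dual ones for $B^\cbbullet/F^{n+1-p}$), together with the five lemma. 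No $E_1$-degeneration enters. One then applies this lemma with $F^\cbbullet=\sigma^\cbbullet$, the graded-piece hypothesis being exactly the Serre-duality pairing coming from \eqref{eq:dualomegaf}.

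By contrast, you invoke the $E_1$-degeneration of both Kontsevich spectral sequences (the Barannikov--Kontsevich type theorem of \cite{E-S-Y13}) as a black box, in order to identify the associated graded of the abutment with the $E_1$-page and then run the elementary linear-algebra fact that perfectness on the graded of a filtered pairing gives perfectness. This is legitimate, but it imports a result that is substantially harder than the proposition being proved; Yu's lemma shows that perfectness on the $E_1$-page already forces perfectness on the abutment \emph{regardless} of whether the spectral sequences degenerate. Your anticipated difficulties (sign bookkeeping, checking that the $E_1$-pairing is Serre duality) are in fact harmless here, since for the stupid filtration each $\gr_\sigma^p$ is a single locally free sheaf placed in one degree and the induced pairing is literally the wedge product \eqref{eq:dualomegaf}.
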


\begin{corollary}
Through the quasi-isomorphism \hbox{$(\Omega^\cbbullet_f(-|P|),\rd\!+\!\rd f)\!\hto\!(\Omega^\cbbullet_f,\rd\!+\!\rd f)$}, the pairing obtained from that of Proposition \ref{prop:Kontsevichperfect}:
\[
\bH^{n+k}\bigl(X,(\Omega^\cbbullet_f,\rd+\rd f)\bigr)\otimes_{\CC}\bH^{n-k}\bigl(X,(\Omega^\cbbullet_f(-H),\rd-\rd f)\bigr)\to H^{2n}_{\dR}(X)
\]
is perfect.\qed
\end{corollary}

\begin{proof}[of Proposition \ref{prop:Kontsevichperfect}]\qed
We will make use of the following lemma.

\begin{lemma}[J.-D.\,Yu]\label{lem:Yu}
Let $A^\cbbullet,B^\cbbullet$ be bounded complexes of $\cO_X$-modules equipped with
\begin{itemize}
\item
finite exhaustive decreasing filtrations $F^\cbbullet$,
\item
a pairing $A^\cbbullet\otimes B^\cbbullet\to(\Omega^\cbbullet_X,\rd)$
\end{itemize}
satisfying the two conditions
\begin{enumerate}
\item\label{lem:Yu1}
the pairing induces a well-defined pairing $F^pA^\cbbullet\otimes (B^\cbbullet/F^{n+1-p}B^\cbbullet)\to(\Omega^\cbbullet_X,\rd)$ for each $p$,
\item\label{lem:Yu2}
the induced pairing $\bH^{n+k}\bigl(X,\gr^p_FA^\cbbullet\bigr)\otimes_{\CC}\bH^{n-k}\bigl(X,\gr^{n-p}_FB^\cbbullet\bigr)\to H^{2n}_{\dR}(X)
$ is perfect for each $k,p$.
\end{enumerate}
Then the induced pairing $\bH^{n+k}\bigl(X,A^\cbbullet\bigr)\otimes_{\CC}\bH^{n-k}\bigl(X,B^\cbbullet\bigr)\to H^{2n}_{\dR}(X)
$ is perfect for each $k$.
\end{lemma}

\begin{proof}
We argue by induction on $p$. We~consider the commutative diagram (omitting~$X$ in the notation and setting $F^p:=F^pA^\cbbullet$ and $G_{n+1-p}:=B^\cbbullet/F^{n+1-p}B^\cbbullet$)
\begin{small}
\[
\xymatrix@C.2cm{
\bH^{n+k-1}(\gr^p_F)\ar[d]\ar[r]
&\bH^{n+k}(F^{p+1})\ar[d]\ar[r]
&\bH^{n+k}(F^p)\ar[d]\ar[r]
&\bH^{n+k}(\gr^p_F)\ar[d]\ar[r]
&\bH^{n+k+1}(F^{p+1})\ar[d]
\\
\bH^{n-k+1}(\gr^{n-p}_F)^\vee\ar[r]
&\bH^{n-k}(G_{n-p})^\vee\ar[r]
&\bH^{n-k}(G_{n+1-p})^\vee\ar[r]
&\bH^{n-k}(\gr^{n-p}_F)^\vee\ar[r]
&\bH^{n-k-1}(G_{n-p})^\vee
}
\]
\end{small}%
By decreasing induction on $p$ and Condition \eqref{lem:Yu2}, the vertical morphisms except maybe the middle one are isomorphisms. Hence the middle one is so. For $p\gg0$, both terms $\bH^{n+k}(F^p)$ and $\bH^{n-k}(G_{n+1-p})^\vee$ are zero, and for $p\ll0$,
\[
\bH^{n+k}(F^p)=\bH^{n+k}(X,A^\cbbullet)\quand\bH^{n-k}(G_{n+1-p})^\vee=\bH^{n-k}(X,B^\cbbullet).\eqno\qed
\]
\end{proof}

If we equip each complex in \eqref{eq:pairing} with the filtration $\sigma^{\cbbullet}$, the pairing \eqref{eq:pairing} clearly satisfies \ref{lem:Yu}\eqref{lem:Yu1}. Furthermore, \ref{lem:Yu}\eqref{lem:Yu2} follows from Serre's duality applied to \eqref{eq:dualomegaf}. Therefore, an application of Lemma \ref{lem:Yu} concludes the proof.\qed
\end{proof}

\begin{remark}\label{rem:Kontsevichperfect}\mbox{}
\begin{enumerate}
\item
A similar argument with the complex $(\Omega_X^\cbbullet,0)$ instead of $(\Omega_X^\cbbullet,\rd)$ yields a perfect pairing induced by Serre's duality:
\[
\bH^{n+k}\bigl(X,(\Omega^\cbbullet_f,\rd f)\bigr)\otimes_{\CC}\bH^{n-k}\bigl(X,(\Omega^\cbbullet_f(-D),-\rd f)\bigr)\to \bH^n(X,\Omega^n_X).
\]
As for the Kontsevich complex, the inclusion $(\Omega^\cbbullet_f(-|P|),\rd f)\hto(\Omega^\cbbullet_f,-\rd f)$ is a quasi-isomorphism, as well as $(\Omega^\cbbullet_f(-D),\rd f)\hto(\Omega^\cbbullet_f(-H),-\rd f)$, so that we deduce a perfect pairing
\[
\bH^{n+k}\bigl(X,(\Omega^\cbbullet_f,\rd f)\bigr)\otimes_{\CC}\bH^{n-k}\bigl(X,(\Omega^\cbbullet_f(-H),-\rd f)\bigr)\to \bH^n(X,\Omega^n_X).
\]
\item
There exist natural perfect pairings
\[
\bH^{n+k}\bigl(Y,(\Omega^\cbbullet(\log H),\rd+\rd f)\bigr)\otimes_{\CC}\bH^{n-k}\bigl(Y,(\Omega^\cbbullet(\log H)(-H),\rd-\rd f)\bigr)\to \CC
\]
and
\[
\bH^{n+k}\bigl(Y,(\Omega^\cbbullet(\log H),\rd f)\bigr)\otimes_{\CC}\bH^{n-k}\bigl(Y,(\Omega^\cbbullet(\log H)(-H),-\rd f)\bigr)\to\CC.
\]
This will be shown with a parameter in Lemma \ref{lem:XY} below, by identifying the source of these pairings respectively with the sources of the pairing of Proposition \ref{prop:Kontsevichperfect} and that of the previous remark.
\end{enumerate}
\end{remark}

\subsection{Kontsevich complexes with the variable \texorpdfstring{$u$}{u}}\label{subsec:Kontsevich}
We now replace $(\Omega_f^\cbbullet,\rd+\rd f)$ with $(\Omega_f^\cbbullet[u],u\rd+\rd f)$. If we make $u^2\partial_u$ act on $\Omega_f^k[u]$ by
\[
u^2\partial_u(\eta\otimes h(u))=\eta\otimes(u^2\partial_u+ku)(h(u))-f\eta\otimes h(u),
\]
then this action commutes with the differential of the complex and induces a natural action on its hypercohomology, \ie a meromorphic connection with a pole of order two at $u=0$ and no other pole except at infinity.

\begin{corollary}\label{cor:dualiteuddf}
The cohomologies
\[
\bH^{n+k}\bigl(X,(\Omega_f^\cbbullet[u],u\rd+\rd f)\bigr)\quad\text{and}\quad \bH^{n-k}\bigl(X,(\Omega_f^\cbbullet(-D)[u],u\rd+\rd f)\bigr)
\]
are $\CC[u]$-free of finite rank, and the pairing
\[
\bH^{n+k}\bigl(X,(\Omega_f^\cbbullet[u],u\rd+\rd f)\bigr)\otimes_{\CC[u]}\bH^{n-k}\bigl(X,(\Omega_f^\cbbullet(-D)[u],u\rd+\rd f)\bigr)
\to H^{2n}_{\dR}(X)[u]
\]
is perfect and compatible with the natural meromorphic action of $\partial_u$. Moreover, the natural morphism
\[
\bH^{n-k}\bigl(X,(\Omega_f^\cbbullet(-D)[u],u\rd+\rd f)\bigr)\to\bH^{n-k}\bigl(X,(\Omega_f^\cbbullet(-H)[u],u\rd+\rd f)\bigr)
\]
is an isomorphism which induces, by means of the previous pairing, a perfect pairing
\[
\bH^{n+k}\bigl(X,(\Omega_f^\cbbullet[u],u\rd+\rd f)\bigr)\otimes_{\CC[u]}\bH^{n-k}\bigl(X,(\Omega_f^\cbbullet(-H)[u],u\rd+\rd f)\bigr)\\
\to H^{2n}_{\dR}(X)[u].
\]
\end{corollary}

\begin{proof}[Proof of Corollary \ref{cor:dualiteuddf}]
Recall that \hbox{$\dim\bH^k\bigl(X,(\Omega^\cbbullet_f,u_o\rd+\rd f)\bigr)$} is independent of $u_o$ (\cf\cite[Th.\,1.3.2]{E-S-Y13}). Since the $\CC[u]$-finiteness is clear by a spectral sequence argument owing to the fact that each term of the complex is $\cO_X[u]$-coherent, the $\CC[u]$-freeness of $\bH^k\bigl(X,(\Omega^\cbbullet_f[u],u\rd+\rd f)\bigr)$ follows from Lemma \ref{lem:basicu}. By duality (Proposition~\ref{prop:Kontsevichperfect} and Remark \ref{rem:Kontsevichperfect}), $\dim\bH^k\bigl(X,(\Omega^\cbbullet_f(-D),u_o\rd-\rd f)\bigr)$ is independent of $u_o$, and since $\CC[u]$-finiteness is also clear, $\CC[u]$-freeness follows.

We deduce the perfectness of the pairing by tensoring with $\CC[u]/(u-u_o)$ for any $u_o$, where it follows from \loccit\ The last assertion follows then from Remark \ref{rem:Kontsevichperfect}.\qed
\end{proof}

\begin{remark}
From the point of view developed in Theorem \ref{th:B} and the other proof given in Section \ref{sec:RF}, it is convenient to consider the complexes  $(u^{-\cbbullet}\Omega_f^\cbbullet[u],\rd\pm\rd f/u)$ with degree $k$ term $u^{-k}\Omega_f^k[u]\subset\Omega_f^k[u,u^{-1}]$. Multiplication by $u^k$ on the degree~$k$ term induces an isomorphism with $(\Omega_f^\cbbullet[u],u\rd\pm\rd f)$. We~deduce isomorphisms
\[
\bH^k\bigl(X,(u^{-\cbbullet}\Omega_f^\cbbullet[u],\rd\pm\rd f/u)\bigr)\simeq\bH^k\bigl(X,(\Omega^\cbbullet_f[u],u\rd\pm\rd f)\bigr).
\]
Due to the perfect pairing $u^{-j}\Omega_f^j[u]\otimes u^{j-n}\Omega_f^{n-j}(-D)[u]\!\to\! u^{-n}\Omega_X^n[u]$ obtained from~\eqref{eq:dualomegaf}, we see that the perfect pairing between these free $\CC[u]$-modules takes values in $u^{-n}H^{2n}_{\dR}(X)[u]$.
\end{remark}

\subsection{Proof of Theorem \ref{th:A}}
A first part of the theorem, namely $\CC[u]$-freeness and finiteness, as well as perfectness of \eqref{eq:pairY}, follows from Corollary \ref{cor:dualiteuddf}, according to the next lemma.

\begin{lemma}\label{lem:XY}
For each $k$, the natural morphisms
\begin{multline*}
\bH^k\bigl(X,(\Omega_f^\cbbullet[u],u\rd+\rd f)\to\bH^k\bigl(X,(\Omega_f^\cbbullet(*P)[u],u\rd+\rd f)\bigr)\\
\to\bH^k\bigl(Y,(\Omega^\cbbullet_Y(\log H)[u],u\rd+\rd f)\bigr),
\end{multline*}
and the similar ones after twisting the complexes by $(-H)$, are isomorphisms.
\end{lemma}

\begin{proof}
We will treat the case without twist, the other case being treated similarly. The proof of \cite[Cor.\,1.4.3]{E-S-Y13} shows that, for any $u_o\in\CC$ and any $\ell\geq1$, the inclusion of complexes
\[
(\Omega^\cbbullet_f,u_o\rd+\rd f)\hto(\Omega^\cbbullet_f(\ell P),u_o\rd+\rd f)
\]
is a quasi-isomorphism. Since $\bH^k\bigl(X,(\Omega^\cbbullet_f(\ell P)[u],u\rd+\rd f)\bigr)$ has finite type over $\CC[u]$, we can apply Lemma \ref{lem:basicu} to deduce that the morphism
\[
\bH^k\bigl(X,(\Omega_f^\cbbullet[u],u\rd+\rd f)\bigr)\to\bH^k\bigl(X,(\Omega^\cbbullet_f(\ell P)[u],u\rd+\rd f)\bigr)
\]
is an isomorphism for any $k$ and $\ell$. Since $\Omega^k_f(*P)=\varinjlim_\ell\Omega^k_f(\ell P)$, we only need to justify the commutation of direct limits and hypercohomology.

\begin{lemma}\label{lem:commutinductive}
Let $(K^\cbbullet_\ell,\delta)_\ell$ be an inductive system of complexes of fixed amplitude on an algebraic variety $Z$, whose terms are quasi-coherent $\cO_X$-modules. Then, for each~$k$, we have
\[
\varinjlim_\ell\bH^k(Z,(K^\cbbullet_\ell,\delta))\isom\bH^k(Z,\varinjlim_\ell(K^\cbbullet_\ell,\delta)).
\]
\end{lemma}

\begin{proof}
We filter the complexes by stupid truncation. We~have such a morphism at each level of the corresponding spectral sequence and it is enough to prove the assertion for the first page of the spectral sequence. This amounts to show the isomorphism
\[
\varinjlim_\ell H^k(Z,K^j_\ell)\isom H^k(Z,\varinjlim_\ell K^j_\ell),
\]
which follows from Noetherianity of $Z$, since $K^j_\ell$ are quasi-coherent.\qed
\end{proof}

To show that the second morphism is an isomorphism, we argue with the same spectral sequence argument. We~are thus reduced to showing that the restriction morphism
\[
H^k(X,\Omega^j_f(*P)[u])\to H^k(Y,\Omega^j_Y(\log H)[u])
\]
is an isomorphism. By the commutation with inductive limits, we are left with showing $H^k(X,\Omega^j_f(*P))\isom H^k(Y,\Omega^j_Y(\log H))$, which is clear since $\Omega^k_Y(\log H)$ is the restriction to $Y$ of $\Omega^k_f(*P)$.\qed
\end{proof}

\begin{proof}[of Theorem \ref{th:A}, end]
The existence of a compatible action of $u^2\partial_u$ has been seen in Section \ref{subsec:Kontsevich}. The regularity of the connection at $u=\infty$ will be seen in Remark \ref{rem:Fourier}.

It remains to show the independence of the good projectivization. It is enough to consider a morphism $\pi:(Y',H')\to(Y,H)$ of such kind which is the identity on $U$, and set $f'=f\circ\pi$. We~have natural morphisms of complexes compatible with the actions of $u^2\partial_u$ and $u^2\partial_u$ respectively:
\begin{multline*}
(\Omega^\cbbullet_Y(\log H)[u],u\rd+\rd f)\to\bR\pi_*\pi^{-1}(\Omega^\cbbullet_Y(\log H)[u],u\rd+\rd f)\\
\hspace*{4.5cm}\to\bR\pi_*(\pi^*\Omega^\cbbullet_Y(\log H)[u],u\rd+\rd f)\\
\to\bR\pi_*(\Omega^\cbbullet_{Y'}(\log H')[u],u\rd+\rd f')
\end{multline*}
inducing a natural morphism of free $\CC[u]$-modules of finite rank with meromorphic connection:
\[
\bH^k(Y,(\Omega^\cbbullet_Y(\log H)[u],u\rd+\rd f))\to\bH^k(Y',(\Omega^\cbbullet_{Y'}(\log H')[u],u\rd+\rd f')).
\]
and a similar property after twisting by $(-H)$ and $(-H')$ respectively. The perfect pairings are also compatible with these morphisms. We~are thus reduced to showing that such a morphism and its analogue after the twist is an isomorphism.

Its restriction to any $u_o\neq0$ is an isomorphism, according to \cite[Cor.\,1.4.3]{E-S-Y13}. The argument for the twisted case will be given in the proof of Theorem \ref{th:B}. To~conclude with Lemma \ref{lem:basicu} it remains to show that the natural morphism
\[
\bH^k(Y,(\Omega^\cbbullet_Y(\log H),\rd f))\to\bH^k(Y',(\Omega^\cbbullet_{Y'}(\log H'),\rd f'))
\]
is an isomorphism, and similarly after a twist by $(-H)$ and $(-H')$ respectively. The non twisted case is proved in \cite[Prop.\,2.3]{C-Y16}. The twisted case can be obtain by duality, by showing that the perfect pairings considered in Remark \ref{rem:Kontsevichperfect} are compatible with the isomorphisms induced by $\pi$. This ends the proof of Theorem \ref{th:A}.\qed
\end{proof}

\begin{remark}[Computation of the rank]\label{rem:computrk}
For $f:U\to\Afu$ as above, let $j_Y:U\hto Y$ denote the inclusion. We~consider the complex $\bR j_{Y*}\CC_U$ on $Y$, and for any $c\in\CC$, the vanishing cycle complex $\phi_{f-c}\bR j_{Y*}\CC_U$. It follows from \cite[Th.\,2]{Bibi97b} that
\[
\rk\bH^k\bigl(Y,(\Omega^\cbbullet_Y(\log H)[u],u\rd+\rd f)\bigr)=\sum_{c\in\CC}\dim\bH^{k-1}(f^{-1}(c),\phi_{f-c}\bR j_{Y*}\CC_U).
\]
\end{remark}

\section{The generic pairing: proof of Theorem \ref{th:B}}\label{sec:pfB}

In Theorem \ref{th:A}, one can replace the logarithmic complex $\Omega^\cbbullet_Y(\log H)$ with the meromorphic complex $\Omega^\cbbullet_Y(*H)$ if one also replaces polynomials in $u$ with Laurent polynomials in $u$, and we will instead consider the $\CC[u,u^{-1}]$-module
\[
\bH^k(U,(\Omega^\cbbullet_U[u,u^{-1}],\rd+\rd f/u))
\]
with its connection induced by the action of $\partial_u$ coming from that of $\partial_u-f/u^2$ on each term of the complex.

\begin{lemma}\label{lem:inductive}
The restriction morphisms
\begin{multline}\tag{\ref{lem:inductive}$\,*$}\label{eq:inductive*}
\bH^k(X,(\Omega^\cbbullet_X(*D)[u,u^{-1}],\rd+\rd f/u))\\
\to\bH^k(Y,(\Omega^\cbbullet_Y(*H)[u,u^{-1}],\rd+\rd f/u))\\
\to\bH^k(U,(\Omega^\cbbullet_U[u,u^{-1}],\rd+\rd f/u))
\end{multline}
are isomorphisms of $\CC[u,u^{-1}]$-modules compatible with the action of $\partial_u$ for each $k$, as well as the natural morphisms
\begin{multline}\tag{\ref{lem:inductive}$\,**$}\label{eq:inductive**}
\bH^k\bigl(Y,(u^{-\cbbullet}\Omega^\cbbullet_Y(\log H)[u],\rd+\rd f/u)\bigr)\otimes_{\CC[u]}\CC[u,u^{-1}]\\
\to\bH^k\bigl(Y,(u^{-\cbbullet}\Omega^\cbbullet_Y(\log H)[u,u^{-1}],\rd+\rd f/u)\bigr)\\
\to\bH^k(Y,(\Omega^\cbbullet_Y(*H)[u,u^{-1}],\rd+\rd f/u)).
\end{multline}
\end{lemma}

\begin{proof}
Compatibility with the action of $\partial_u$ is clear, as it already holds at the level of complexes. The isomorphism property follows Lemma \ref{lem:commutinductive}, except for the last morphism of \eqref{eq:inductive**}.

For the latter, its left-hand side is $\CC[u,u^{-1}]$-free of finite rank, after Theorem \ref{th:A}. Its rank is given in Remark \ref{rem:computrk}.
The right-hand side is interpreted as the localized Fourier transform of the pushforward by $f$ (at a suitable degree) of the $\cD_Y$\nobreakdash-mod\-ule~$\cO_Y(*H)$. It is well-known to be $\CC[u,u^{-1}]$-free of the rank given by Remark \ref{rem:computrk}. If we fix $u_o\neq0$, then the corresponding morphism in cohomology is an isomorphism, as follows from \cite[Cor.\,1.4]{Yu12} and noticed in \cite[\S1.2]{E-S-Y13}. The conclusion follows from Lemma~\ref{lem:basicu} (over the ring $\CC[u,u^{-1}]$ instead of $\CC[u]$).\qed
\end{proof}

\begin{remark}\label{rem:Fourier}
The interpretation in terms of Fourier transform shows that the action of $\partial_u$ has a regular singularity at $u=\infty$, since the Gauss-Manin systems of $f$ have regular singularity at each of their singularities.
\end{remark}

In order to interpret $\bH^k\bigl(Y,(\Omega^\cbbullet_Y(\log H)(-H)[u,u^{-1}],\rd+\rd f/u)\bigr)$ (to which the first isomorphism of \eqref{eq:inductive**} applies in a similar way) in meromorphic terms, we work in the category of holonomic $\cD$-modules on $Y$. More precisely, we~consider the ring\enlargethispage{\baselineskip}
\[
\cD_Y^u:=\cD_Y[u,u^{-1}]=\cD_Y\otimes_{\CC}\CC[u,u^{-1}]
\]
of algebraic differential operator on $Y$ with coefficients in $\cO^u_Y:=\cO_Y[u,u^{-1}]$, so that the base ring is $\CC[u,u^{-1}]$ instead of the field $\CC$ (it would be equivalent to consider differential operators on $Y\times\Gm$ relative to the projection to $\Gm$).

The $\cD_Y^u$-module $E_Y^{f/u}=(\cO^u_Y,\rd+\rd f/u)$ comes with two localizations along~$H$, denoted by $E_Y^{f/u}(*H)$ and $E_Y^{f/u}(!H)$. They satisfy the following properties:
\begin{itemize}
\item
$E_Y^{f/u}(*H)$ is generated, as a $\cD_Y^u$-module, by $E_Y^{f/u}(H)$ which is a $\cD_Y^u(\log H)$-module.\footnote{$\cD_Y^u(\log H)$ is locally generated by vector fields which are logarithmic along $H$, and the notation $(-\log H)$ should be more adapted.} It satisfies moreover
\[
E_Y^{f/u}(*H)\simeq\cD_Y^u\otimes_{\cD_Y^u(\log H)}(E_Y^{f/u}(H)).
\]
\item
$E_Y^{f/u}(!H)$ is defined as
\[
E_Y^{f/u}(!H):=\cD_Y^u\otimes_{\cD_Y^u(\log H)}(E_Y^{f/u}(-H)),
\]
where $E_Y^{f/u}(-H)$ is regarded as a $\cD_Y^u(\log H)$-module.
\end{itemize}

\begin{lemma}
The $\CC[u,u^{-1}]$-modules $H^k_\dR(Y,E_Y^{f/u}(*H))$ and $H^k_\dR(Y,E_Y^{f/u}(!H))$ are free of finite rank for each $k$.
\end{lemma}

\begin{proof}
For a \emph{regular holonomic} $\cD$-module $M$ on the affine line~$\Afu$ with affine coordinate $t$, the de~Rham cohomology \hbox{$H^k_\dR(\Afu,M[u,u^{-1}]\otimes E^{t/u})$} is nonzero in degree $k=1$ at most, and this cohomology is $\CC[u,u^{-1}]$-locally free of finite rank (\cf\eg\cite{Malgrange91}). On~noting that, for $\star=*,!$, we have \hbox{$E_Y^{f/u}(\star H)\simeq\cO_Y(\star H)\otimes_{\cO_Y}E_Y^{f/u}$} (where $\cO_Y(\star H)$ is defined in a way similar to that of $E_Y^{f/u}(\star H)$), we obtain the statement by applying the previous result to the $\cD$-module pushforward \hbox{$M=\cH^{k-1-n}f_+\cO_Y(\star H)$}, which is known to be regular holonomic.\qed
\end{proof}

\begin{proof}[Proof of Theorem \ref{th:B}]
As already noticed, the right-hand side in \eqref{eq:inductive**} can be rewritten as $H^{k-n}_{\dR}(Y,E_Y^{f/u}(*H))$, and this yields the first line \eqref{eq:B*} of Theorem \ref{th:B}.

We now prove the isomorphism of the second line \eqref{eq:B!}. The proof can also be adapted to the first line, and thereby gives another way to obtain \eqref{eq:inductive**}. For a $\cD^u_Y$\nobreakdash-module $M$, the de~Rham complex is a realization (up to a shift $[n]$) of $\omega^u_Y\otimes_{\cD^u_Y}^L\nobreak M$ and, similarly, for a $\cD^u_Y(\log H)$-module $N$, the logarithmic de~Rham complex of $N$ is a realization (up to a shift $[n]$) of $\omega^u_Y\otimes_{\cD^u_Y(\log H)}^LN$, where the canonical sheaf $\omega^u_Y=\omega_Y\otimes_{\CC}\nobreak\CC[u,u^{-1}]$ is equipped with its natural structure of right $\cD^u_Y$\nobreakdash-module. We~interpret (up to a shift $[n]$) the left-hand side of \eqref{eq:B!} as the hypercohomology on~$Y$ of $\omega^u_Y\otimes_{\cD^u_Y(\log H)}^LE_Y^{f/u}(-H)$ and the right hand side as that of $\omega^u_Y\otimes_{\cD^u_Y}^LE_Y^{f/u}(!H)$. Due to the isomorphism
\[
\omega^u_Y\otimes_{\cD^u_Y}^L(\cD^u_Y\otimes_{\cD^u_Y(\log H)}^LE_Y^{f/u}(-H))\simeq\omega^u_Y\otimes_{\cD^u_Y(\log H)}^LE_Y^{f/u}(-H),
\]
the isomorphism \eqref{eq:B!} would follow from the isomorphism
\[
\cD^u_Y\otimes_{\cD^u_Y(\log H)}E_Y^{f/u}(-H)\simeq \cD^u_Y\otimes_{\cD^u_Y(\log H)}^LE_Y^{f/u}(-H).
\]
Although $\cD^u_Y$ is not $\cD^u_Y(\log H)$-flat, one can use the criterion of \cite[Prop.\,B.5]{E-S18}: this isomorphism holds if in any local coordinate system adapted to the divisor $H$ where $H=\{x_1\cdots x_\ell=0\}$, any subsequence of the sequence $(x_1,\dots,x_\ell)$ is a regular sequence for $E_Y^{f/u}(-H)$. In the present setting, this criterion is easily checked, and this ends the proof of \eqref{eq:B!}. By replacing $E_Y^{f/u}(-H)$ with $E_Y^{f/u}(H)$, one would obtain another proof of \eqref{eq:B*}.\qed
\end{proof}

\section{Another approach to Theorem \ref{th:A}}\label{sec:RF}

\subsection{Application of the theory of mixed Hodge module}
We will make use of the theory of mixed Hodge modules of M.\,Saito \cite{MSaito86,MSaito87} in order to prove $\CC[u]$-freeness in Theorem~\ref{th:A}.

We consider the algebraic mixed Hodge modules on $Y$ whose underlying filtered $\cD_Y$-modules are respectively $(\cO_Y(*H),F_\bbullet\cO_Y(*H))$ and $(\cO_Y(!H),F_\bbullet\cO_Y(!H))$. There exists a natural morphism of mixed Hodge modules inducing the natural morphism of filtered $\cD_Y$-modules:
\[
(\cO_Y(!H),F_\bbullet\cO_Y(!H))\to(\cO_Y(*H),F_\bbullet\cO_Y(*H)).
\]
It is understood that the Hodge filtrations $F_\bbullet\cO_Y(!H),F_\bbullet\cO_Y(*H)$ are coherent filtrations with respect to the filtration $F_\bbullet\cD_Y$ of $\cD_Y$ by the order of differential operators.

We consider the Rees module construction, by setting $R_F\cD_Y=\bigoplus_kF_k\cD_Yu^k$ and similarly for filtered $\cD_Y$-modules. In particular, $R_F\cO_Y=\cO_Y[u]$.

We set $G_0[\star H]=R_F(\cO_Y(\star H))$, with $\star={!},{*}$, that we consider as an $R_F\cD_Y$-module. We~obtain corresponding de~Rham complexes of $\CC[u]$-modules:
\begin{multline*}
\DR_YG_0[\star H]=\bigl\{0\ra R_F(\cO_Y(\star H))\xrightarrow{\textstyle\rd+\rd f/u}\cdots\\
\xrightarrow{\textstyle\rd+\rd f/u}u^{-n}\Omega^n_Y\otimes R_F(\cO_Y(\star H))\ra0\bigr\}.
\end{multline*}

\begin{proposition}\label{prop:RF}
We have a commutative diagram for each $k$:
\[
\xymatrix@C=.4cm{
\bH^k\bigl(Y,(u^{-\cbbullet}\Omega_Y^\cbbullet(\log H)(-H)[u],\rd+\rd f/u)\bigr)\ar[d]_\wr\ar[r]&\bH^k\bigl(Y,(u^{-\cbbullet}\Omega_Y^\cbbullet(\log H)[u],\rd+\rd f/u)\bigr)\ar[d]^\wr\\
\bH^k\bigl(Y,\DR_YG_0[!H]\bigr)\ar[r]&\bH^k\bigl(Y,\DR_YG_0[*H]\bigr)
}
\]
\end{proposition}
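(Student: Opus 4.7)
The plan is to identify each complex on the bottom row of the diagram as quasi-isomorphic to the corresponding complex on the top row, so that the commutativity of the square follows from naturality. The essential input from the theory of mixed Hodge modules is M.\,Saito's filtered quasi-isomorphism, for a normal crossing divisor $H$,
\[
\bigl(\Omega^\cbbullet_Y(\log H),\sigma^{\geq-\cbbullet}\bigr)\simeq \bigl(\DR_Y\cO_Y(*H),F_\bbullet\bigr),\quad \bigl(\Omega^\cbbullet_Y(\log H)(-H),\sigma^{\geq-\cbbullet}\bigr)\simeq \bigl(\DR_Y\cO_Y(!H),F_\bbullet\bigr),
\]
where the left-hand sides are equipped with the stupid-truncation filtration and the right-hand sides with Saito's Hodge filtration. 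This is the Hodge-module refinement of Deligne's classical comparison between the meromorphic and logarithmic de~Rham complexes.

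Applying the Rees functor $R_F$ to these filtered quasi-isomorphisms and using the identifications $R_F(\Omega^k_Y(\log H),\sigma^{\geq-k})=u^{-k}\Omega^k_Y(\log H)[u]$ and $R_F\DR_Y=\DR_Y R_F$, one obtains the desired quasi-isomorphisms of complexes of $\CC[u]$-modules
\[
\bigl(u^{-\cbbullet}\Omega^\cbbullet_Y(\log H)[u],\rd\bigr)\simeq\DR_Y G_0[*H],\quad \bigl(u^{-\cbbullet}\Omega^\cbbullet_Y(\log H)(-H)[u],\rd\bigr)\simeq\DR_Y G_0[!H]
\]
in the untwisted case $f=0$. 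To incorporate the $\rd f/u$-twist, I would tensor both sides with an appropriate formal realization of the exponential connection $E^{f/u}$ --- which enters the Rees framework as the modification $\rd\mapsto\rd+\rd f/u$ of the differential, despite $1/u$ not lying in $R_F\cD_Y$ --- and note that this twist is compatible with the quasi-isomorphism above, since the $\rd f/u$ term shifts the $u$-degree identically on both sides. An alternative route to the same identification, in the spirit of the proof of Theorem~\ref{th:B}, is to present $\cO_Y(*H)$ and $\cO_Y(!H)$ as induced filtered $\cD_Y$-modules from $\cD_Y(\log H)$ with generators $\cO_Y(\pm H)$, and to verify that the induction is underived via the regular sequence criterion \cite[Prop.\,B.5]{E-S18}; this reduces $\DR_Y G_0[\star H]$ to the logarithmic de~Rham complex of the generator, up to a further classical quasi-isomorphism in the $\star=*$ case removing the $(H)$ factor.

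Once each row has been identified, the commutativity of the square is automatic: the horizontal morphism on the bottom row is induced by the canonical morphism of mixed Hodge modules $\cO_Y(!H)\to\cO_Y(*H)$, which under the identifications above corresponds to the inclusion $\Omega^\cbbullet_Y(\log H)(-H)\hookrightarrow\Omega^\cbbullet_Y(\log H)$ displayed on the top row. The principal technical difficulty, to my mind, lies in promoting Saito's filtered quasi-isomorphism to the Rees level (so that it remains a quasi-isomorphism after $R_F$, which requires strictness of the Hodge filtration) and simultaneously transporting the $\rd f/u$-twist through the identification in the formal Rees framework where $1/u\notin R_F\cD_Y$ --- a point naturally handled by the shifted presentation $u^{-\cbbullet}\Omega^\cbbullet_Y\otimes(\cdot)$ of the statement, in which $1/u$ appears only as a $u$-degree shift.
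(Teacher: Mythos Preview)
Your alternative route---presenting $G_0[\star H]$ as induced from $R_F\cD_Y(\log H)$ and invoking \cite[Prop.\,B.5]{E-S18} for derived $=$ underived---is precisely the paper's proof, which is a one-line reference back to the argument for Theorem~\ref{th:B} with $R_F\cD_Y$, $R_F\cD_Y(\log H)$ replacing $\cD_Y^u$, $\cD_Y^u(\log H)$. So the argument you label secondary is the one the paper actually gives.

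Your primary route (Saito's filtered quasi-isomorphism for $f=0$, then Rees, then graft on the $\rd f/u$-twist) is a different organization. The first two steps are unproblematic; the twist step is exactly the soft spot you yourself flag. A quasi-isomorphism of complexes is not in general preserved when one perturbs both differentials by the same $\cO_Y[u]$-linear map, so something further---a bounded filtration for which the perturbation has positive degree, or an appeal to Lemma~\ref{lem:basicu} once freeness on both sides is known independently---would be needed to close it. What the induction route buys is that this perturbation step disappears: as in the proof of Theorem~\ref{th:B}, the twist is carried by the generating module $N$ from the outset, and the comparison $\omega_Y\otimes^L_{R_F\cD_Y}(R_F\cD_Y\otimes^L_{R_F\cD_Y(\log H)}N)\simeq\omega_Y\otimes^L_{R_F\cD_Y(\log H)}N$ is a tautology valid for any such $N$, twisted or not.
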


\begin{proof}
The proof of the isomorphisms is similar to that of Theorem \ref{th:B} by using the sheaves $R_F\cD_Y$ and $R_F\cD_Y(\log H)$ (deno\-ted respectively $\cswt\cD_Y$ and $\cswt\cD_Y(\log H)$ in \cite{E-S18}) instead of the sheaves~$\cD_Y^u$ and $\cD_Y^u(\log H)$. The criterion of \cite[Prop.\,B.5]{E-S18} applies in a straightforward way.\qed
\end{proof}

\begin{remark}\label{rem:finitetype}
The notation $G_0(*H)$ (instead of $G_0[*H]$) would mean $(R_F\cO_Y)(*H)$ (instead of $R_F(\cO_Y(*H))$). Thus $G_0(*H)=\cO_Y(*H)[u]$, and the associated twisted de~Rham complex is that considered at the end of Section \ref{subsec:nota}. Its hypercohomology may not be of finite type, as we have seen in \ref{subsec:comparing}\eqref{enum:a}. This means that, in order to obtain finite type, one needs to take into account the Hodge filtration at infinity on~$U$ (with $f$ remaining finite). A similar comment applies to $G_0(!H)$. On~the other hand, Theorem \ref{th:B} shows that this distinction disappears if we invert $u$.
\end{remark}

\subsection{The case of tame functions}
For tame functions, we will show that there is no need to twist by $(-H)$ in the pairing \eqref{eq:pairY} of Theorem~\ref{th:A}, similarly to what occurs in the proper case.

\begin{definition}[Katz tameness, {\cf\cite[Prop.\,14.13.3]{Katz90}}]\label{def:tame}
We say that $f:U\to\CC$ is Katz-tame if the cone of the natural morphism of complexes
\[
\bR f_!\CC_U\to\bR f_*\CC_U
\]
has constant cohomology sheaves.
\end{definition}

\begin{proposition}\label{prop:tame}
Assume that $f:U\to\CC$ is Katz-tame. Then, for every $k$, the natural morphism
\begin{equation}\tag{\ref{prop:tame}$\,*$}\label{eq:tame}
\bH^k\bigl(Y,\DR_YG_0[!H]\bigr)\to\bH^k\bigl(Y,\DR_YG_0[*H]\bigr)
\end{equation}
is an isomorphism.
\end{proposition}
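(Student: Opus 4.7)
My plan is to use Proposition~\ref{prop:RF} to reduce \eqref{eq:tame} to showing that the natural morphism
\[
\bH^k\bigl(Y,(u^{-\cbbullet}\Omega_Y^\cbbullet(\log H)(-H)[u],\rd+\rd f/u)\bigr)\to\bH^k\bigl(Y,(u^{-\cbbullet}\Omega_Y^\cbbullet(\log H)[u],\rd+\rd f/u)\bigr)
\]
is an isomorphism, and then to push everything forward to $\Afu$ via the projective morphism $f:Y\to\Afu$ and reduce to an explicit vanishing computation enabled by Katz tameness.

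Using Saito's strictness theorem for the projective pushforward $f_+$ of the mixed Hodge modules $\cO_Y(!H)$ and $\cO_Y(*H)$, we have
\[
f_+G_0[\star H]\simeq R_F\bigl(f_+\cO_Y(\star H)\bigr)\quad\text{for }\star={!},{*},
\]
as filtered $\cD_{\Afu}$-modules. Combining this with the projection formula for the twist by $E_Y^{f/u}=f^*E_{\Afu}^{t/u}$, the hypercohomologies on $Y$ translate to
\[
\bH^k\bigl(\Afu,\DR_{\Afu}\bigl(R_F(f_+\cO_Y(\star H))\otimes E_{\Afu}^{t/u}\bigr)\bigr),
\]
and the morphism of interest becomes the one induced by the natural map $f_+\cO_Y(!H)\to f_+\cO_Y(*H)$.

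Let $\cC^\cbbullet$ denote the cone of the latter in $D^b(\mathrm{MHM}(\Afu))$. By Katz tameness (Definition~\ref{def:tame}), the underlying complex of perverse sheaves of $\cC^\cbbullet$, namely the cone of $\bR f_!\CC_U\to\bR f_*\CC_U$, has constant cohomology sheaves. Thus each $\cH^i\cC^\cbbullet$ is a mixed Hodge module on $\Afu$ with constant underlying $\cD$-module, hence an iterated extension of Tate twists $\QQ^H_{\Afu}(m)$. Via long exact sequences in twisted de~Rham cohomology (applied to the truncation filtration on $\cC^\cbbullet$ and to the successive extensions within each $\cH^i\cC^\cbbullet$), the vanishing of $\bH^k(\Afu,\DR_{\Afu}(R_F\cC^\cbbullet\otimes E_{\Afu}^{t/u}))$ reduces to the case of a single Tate twist. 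For $\QQ^H_{\Afu}(m)$, the Rees module $R_F\QQ^H_{\Afu}(m)$ is isomorphic to $\cO_{\Afu}[u]$ as a $\CC[u]$-module, and the twisted de~Rham complex reduces, after identifying $\Omega^1_{\Afu}\simeq\cO_{\Afu}$ via $\rd t$, to
\[
\CC[t][u]\xrightarrow{\,u\partial_t+1\,}\CC[t][u].
\]
This map is bijective: writing $g=\sum g_ju^j$, the relation $(u\partial_t+1)g=0$ forces $g_j=-g_{j-1}'$ (with $g_{-1}=0$), so $g=0$ by induction; and given $h=\sum h_ju^j$, the recursion $g_0=h_0,\ g_i=h_i-g_{i-1}'$ produces a preimage and terminates since the $t$-degree of $g_i$ decreases once $i$ exceeds the $u$-degree of $h$.

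The step I expect to be the main obstacle is the reduction to $\Afu$ via Saito's strictness and the projection formula for the twist by $E^{f/u}$: one must ensure that the Rees construction interacts correctly with the filtered $\cD$-module pushforward and that the identifications preserve the morphism in question. Once this reduction is in place, the structure of constant mixed Hodge modules combined with the elementary vanishing on $\Afu$ completes the argument, bypassing the intricate local geometry of the divisor $H\subset Y$.
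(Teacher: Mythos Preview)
Your proposal is correct and follows essentially the same strategy as the paper's proof: push forward to $\Afu$ using Saito's strictness ($E_1$-degeneration) for the filtered pushforward of mixed Hodge modules, use Katz-tameness to see that the discrepancy between the $!$- and $*$-extensions is built out of constant mixed Hodge modules on~$\Afu$, and then conclude by the elementary check that the twisted de~Rham complex of $R_F\cO_{\Afu}$ is acyclic. The only stylistic difference is that the paper works degree by degree---first observing that the twisted de~Rham complex of any $R_FM$ on $\Afu$ has hypercohomology concentrated in a single degree, and then comparing kernels and cokernels of $R_FM^k_!\to R_FM^k_*$---whereas you package everything into the cone $\cC^\cbbullet$ in $\catD^\rb(\MHM(\Afu))$ and reduce via truncation. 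One point you pass over quickly, which the paper isolates as a lemma, is that ``constant ordinary cohomology sheaves'' implies ``constant perverse cohomology'' on $\Afu$ (equivalently, that each $\cH^i\cC^\cbbullet$ really has constant underlying $\cD$-module); this is standard, but worth stating since it is exactly where the Katz-tameness hypothesis enters at the level of mixed Hodge modules.
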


\begin{proof}
If $M$ is a left $\cD_{\Afu}$-module on the affine line with coordinate $t$, we also consider it as an $\cO_{\Afu}$-module with connection $\nabla$. Let $(M,F_\bbullet M)$ be the coherent filtered $\cD$-module and let $R_FM$ denote the associated $R_F\cD_{\Afu}$-module equipped with its $u$\nobreakdash-connection $u\nabla$. The twisted de~Rham complex
\[
0\to R_FM\To{\nabla+\rd t/u}u^{-1}\Omega^1_{\Afu}\otimes R_FM\to0
\]
has nonzero cohomology in degree $1$ at most (\ie $\nabla+\rd t/u$ is injective). Furthermore, it also has nonzero hypercohomology in degree $1$ at most: hypercohomology is computed by means of global sections on $\Afu$ of the latter complex, and injectivity of $\nabla+\rd t/u$ is checked similarly.

On the other hand, for each $j$, the $j$-th pushforward by the map $f:Y\to\Afu$ of the Rees modules $R_F\cO_Y(\star H)$ ($\star={!},{*}$) takes the form $R_FM$ for some filtered $\cD_{\Afu}$\nobreakdash-mod\-ule~$M$: this property is equivalent to the degeneration at $E_1$ of the spectral sequence attached to the proper pushforward of a coherent filtered $\cD_Y$-module when the latter underlies a mixed Hodge module \cite{MSaito87}. It follows that $\bH^k\bigl(Y,\DR_YG_0[\star H]\bigr)$ can be computed as the hypercohomology of $R_FM_\star^k$, where $R_FM_\star^k$ is the $R_F\cD_{\Afu}$-module underlying the pushforward (of the suitable degree) of $R_F\cO_Y(\star H)$.

The morphism of constructible complexes in the assumption of Katz-tameness comes from a morphism of the corresponding objects in the derived category $\catD^\rb(\MHM(\Afu))$ (\cf\cite{MSaito87}), hence of the corresponding cohomology mixed Hodge modules $\cM_!^j$ and $\cM_*^j$.

\begin{lemma}
Under the assumption of Katz-tameness, for each $k$, the kernel and cokernel of the natural morphism $\cM_!^j\to\cM_*^j$ are constant mixed Hodge modules (\ie whose associated perverse sheaf is the constant sheaf up to a shift).
\end{lemma}

\begin{proof}
It is a matter of proving that a constructible complex on $\Afuan$ whose cohomology is constant has also constant perverse cohomology. This is standard (\eg by using that a constructible complex on $\Afuan$ has constant cohomology \resp perverse cohomology if and only if for any $c\in\Afuan$ the associated complex of vanishing cycles at $c$ is isomorphic to zero).\qed
\end{proof}

A constant mixed Hodge module on $\Afu$ has a finite filtration (the weight filtration) whose pure graded Hodge modules are also constant, and the associated filtered $\cD_{\Afu}$\nobreakdash-modules are isomorphic to $\cO_{\Afu}$ with its standard filtration possibly shifted, so that $R_F\cO_{\Afu}\simeq\cO_{\Afu}[u]$. With the notation above, for each $k$, the kernel and cokernel of the natural morphism $R_FM_!^k\to R_FM_*^k$ are of that form.

By the first part of the proof, that the natural morphism \eqref{eq:tame} is an isomorphism will thus be proved if we prove that the hypercohomology of the twisted de~Rham complex associated to $R_F\cO_{\Afu}$ is zero, that is, the kernel and cokernel of
\[
\CC[t,u]\To{\partial_t+1/u}u^{-1}\CC[t,u]
\]
are zero. This is a simple check (for the vanishing of the cokernel, one uses that a polynomial of degree $d$ in $\CC[t]$ is annihilated by $\partial_t^{d+1}$).\qed
\end{proof}

It follows immediately from Proposition \ref{prop:RF} that, in the tame case, we can omit the twist by $(-H)$ in \eqref{eq:pairY}:

\begin{corollary}\label{cor:tame}
Assume that $f:U\!\to\!\CC$ is Katz-tame. Then, for each $k$, the pairing~\eqref{eq:pairY} induces a nondegenerate pairing between free $\CC[u]$-modules of finite rank:
\begin{multline*}
\bH^{n+k}\bigl(Y,(u^{-\cbbullet}\Omega_Y^\cbbullet(\log H)[u],\rd+\rd f/u)\bigr)\\
\otimes_{\CC[u]} \bH^{n-k}\bigl(Y,(u^{-\cbbullet}\Omega_Y^\cbbullet(\log H)[u],\rd-\rd f/u)\bigr)
\to u^{-n}\CC[u].
\end{multline*}
\end{corollary}

According to \eqref{eq:inductive**}, this implies:

\begin{corollary}\label{cor:tameb}
Assume that $f:U\!\to\!\CC$ is Katz-tame. Then, for each $k$, the pairing~\eqref{eq:pairY} induces a nondegenerate pairing between free $\CC[u,u^{-1}]$-modules of finite rank:
\begin{multline*}
\bH^{n+k}(U,(\Omega^\cbbullet_U[u,u^{-1}],\rd+\rd f/u))\\\otimes_{\CC[u,u^{-1}]} \bH^{n-k}(U,(\Omega^\cbbullet_U[u,u^{-1}],\rd+\rd f/u))
\to\CC[u,u^{-1}].
\end{multline*}
\end{corollary}

\begin{example}\label{ex:tame}
Assume that $H$ is smooth and $f_{|H}$ is smooth. It follows that for each $t\in\Afu$, $f^{-1}(t)$ is smooth near $H$ and cuts $H$ transversally. Let $j:U\hto Y$ denote the open inclusion and $i:H\hto Y$ the complementary closed inclusion. Then the cone of the natural morphism $j_!\CC_U\to \bR j_*\CC_U$ is supported on $H$ and its restriction to~$H$ is isomorphic to $i^{-1}\bR j_*\CC_U$, which has constant cohomology sheaves. It follows that the Katz-tameness condition is satisfied by $f$ on $U$ (use \eg the argument with vanishing cycles and the good behaviour of the vanishing cycle functor by proper pushforward). Let us also notice that the assumptions in Theorem \ref{th:Ch} are also fulfilled in this example.
\end{example}

\section{The formal pairing}\label{sec:pfC}

\subsection{Proof of Theorem \texorpdfstring{\ref{th:Ah}}{refAh}}
We~first make formal the Kontsevich complex with the $u$-parameter. For that purpose, we set $\Omega_f^k\lcr u\rcr:=\varprojlim_\ell\Omega_f^k[u]/u^\ell\Omega_f^k[u]$. We~refer to \cite{Bibi10b} for some properties of this construction.

\begin{proposition}\label{prop:formalization}
For any $k$, we have
\[
\bH^k\bigl(X,(\Omega^\cbbullet_f\lcr u\rcr,u\rd+\rd f)\bigr)=\CC\lcr u\rcr\otimes_{\CC[u]}\bH^k\bigl(X,(\Omega^\cbbullet_f[u],u\rd+\rd f)\bigr)
\]
and a similar property for $\Omega^\cbbullet_f(-D)$, $\Omega^\cbbullet_f(-H)$, and the pairs $(Y,\Omega^\cbbullet_Y(\log H))$ and $(Y,\Omega^\cbbullet_Y(\log H)(-H))$.
\end{proposition}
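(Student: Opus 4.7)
The plan is to compute the formal hypercohomology via a \v{C}ech double complex and to exchange $\varprojlim$ with cohomology, relying in an essential way on the $\CC[u]$-freeness of $K^k:=\bH^k(X,(\Omega_f^\cbbullet[u],u\rd+\rd f))$ established in Corollary~\ref{cor:dualiteuddf}.

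First I would fix a finite affine open cover $\mathcal{V}=\{V_i\}$ of $X$ and let $C^\cbbullet$ be the total \v{C}ech complex associated to $(\mathcal{V},(\Omega_f^\cbbullet[u],u\rd+\rd f))$, so that $\bH^k(C^\cbbullet)=K^k$. Each $\Omega_f^k[u]/u^\ell\Omega_f^k[u]$ is a finite direct sum of copies of the coherent sheaf $\Omega_f^k$ and hence quasi-coherent, so $C^\cbbullet/u^\ell C^\cbbullet$ computes $\bH^k(X,(\Omega_f^\cbbullet[u]/u^\ell,u\rd+\rd f))$. Since finite products commute with $\varprojlim$, the complex $\varprojlim_\ell C^\cbbullet/u^\ell C^\cbbullet$ coincides with the total \v{C}ech complex of $(\mathcal{V},(\Omega_f^\cbbullet\lcr u\rcr,u\rd+\rd f))$ and therefore computes the left-hand side of the proposition.

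Next I would identify $\bH^k(X,(\Omega_f^\cbbullet[u]/u^\ell,u\rd+\rd f))$ with $K^k/u^\ell K^k$. The long exact sequence associated to $0\to C^\cbbullet\xrightarrow{u^\ell}C^\cbbullet\to C^\cbbullet/u^\ell C^\cbbullet\to 0$ collapses, because multiplication by $u^\ell$ is injective on $K^{k+1}$ by freeness, and yields $\bH^k(X,(\Omega_f^\cbbullet[u]/u^\ell,u\rd+\rd f))\simeq K^k/u^\ell K^k$. In particular all transition maps in the tower $\{K^k/u^\ell K^k\}_\ell$ are surjective, so Mittag-Leffler holds in every cohomological degree. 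Applying the Milnor short exact sequence to the tower $(C^\cbbullet/u^\ell C^\cbbullet)$ of complexes with termwise surjective transition maps then gives
\[
\bH^k(X,(\Omega_f^\cbbullet\lcr u\rcr,u\rd+\rd f))\simeq\varprojlim_\ell K^k/u^\ell K^k,
\]
the vanishing of the $\varprojlim^1$-term being guaranteed by Mittag-Leffler on $\bH^{k-1}$. The right-hand side is the $u$-adic completion of the finitely generated $\CC[u]$-module $K^k$, hence equal to $\CC\lcr u\rcr\otimes_{\CC[u]}K^k$, as desired.

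The same argument would apply verbatim to $\Omega^\cbbullet_f(-D)$, $\Omega^\cbbullet_f(-H)$, and to $(\Omega^\cbbullet_Y(\log H)[u],u\rd+\rd f)$ and its twist by $(-H)$: Corollary~\ref{cor:dualiteuddf} (on the projective variety $X$) and Theorem~\ref{th:A} (on the separated quasi-projective variety $Y$, where \v{C}ech with a finite affine cover still computes hypercohomology) provide $\CC[u]$-freeness and $\CC[u]$-finiteness of the relevant $K^k$ in each case. The main obstacle I anticipate is precisely the exchange of $\varprojlim$ with cohomology: it rests entirely on the $\CC[u]$-freeness of $K^k$, without which the transition maps in $\{K^k/u^\ell K^k\}_\ell$ could fail to be surjective and the formal and polynomial hypercohomologies could genuinely differ, as \ref{subsec:comparing}\eqref{enum:a} illustrates in the absence of a good projectivization.
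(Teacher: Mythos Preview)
Your argument is correct and is essentially the paper's own strategy made explicit: both use $\CC[u]$-freeness (Corollary~\ref{cor:dualiteuddf}, and Theorem~\ref{th:A} for the $Y$ cases) to identify $\bH^k$ of the mod-$u^\ell$ complex with $K^k/u^\ell K^k$, and your \v{C}ech-plus-Milnor step supplies the passage to $\varprojlim_\ell$ that the paper compresses into ``so that the conclusion follows.'' The one point that deserves a word of justification is that the \v{C}ech complex of $\Omega_f^\cbbullet\lcr u\rcr$ really computes hypercohomology, since $\Omega_f^k\lcr u\rcr$ is \emph{not} quasi-coherent; acyclicity on affines nonetheless holds because $\bR\Gamma$ commutes with $\bR\varprojlim$ and the tower $(\Omega_f^k[u]/u^\ell)$ has surjective transition maps on sections over affines --- the paper invokes this same commutation of cohomology with $\lcr u\rcr$, via \cite[\S4]{Hartshorne75}, in the proof of Lemma~\ref{lem:restr}.
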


\begin{proof}
We prove the case of $\Omega_f^\cbbullet$, the other cases being proved similarly (by using Lemma \ref{lem:XY} for the latter two pairs). By a straightforward induction, arguing as in Lemma \ref{lem:basicu} (due to the freeness property in Corollary \ref{cor:dualiteuddf}), we find for each $\ell\geq1$:
\begin{multline*}
\bH^k\bigl(X,(\Omega^\cbbullet_f[u]/u^\ell\Omega^\cbbullet_f[u],u\rd+\rd f)\bigr)\\
\simeq\bH^k\bigl(X,(\Omega^\cbbullet_f[u],u\rd+\rd f)\bigr)/u^\ell\bH^k\bigl(X,(\Omega^\cbbullet_f[u],u\rd+\rd f)\bigr),
\end{multline*}
so that the conclusion follows.\qed
\end{proof}

\begin{proof}[of Theorem \ref{th:Ah}]
We just apply the functor $\CC\lcr u\rcr\otimes_{\CC[u]}$ to the statements of Theorem \ref{th:A}, according to Proposition \ref{prop:formalization}.\qed
\end{proof}

\subsection{Proof of Theorem \texorpdfstring{\ref{th:Ch}}{Ch}}\label{sec:formalmeromorphic}
We consider the complex $(\Omega_U^\cbbullet\lcr u\rcr,u\rd+\rd f)$. The $\CC$\nobreakdash-cons\-tructible complex of vanishing cycles $\phi_{f-c}\CC_{U^\an}$ (which is perverse up to a shift) will come into play.

\begin{proposition}\label{prop:C}
Assume that the critical set of $f$ is compact. Then the $\CC\lcr u\rcr$-module $\bH^k\bigl(U,(\Omega_U^\cbbullet\lcr u\rcr,u\rd+\rd f)\bigr)$ is of finite type for each~$k$. It is $\CC\lcr u\rcr$\nobreakdash-free for every $k$ if and only if
\[
\dim\bH^k\bigl(U,(\Omega_U^\cbbullet,\rd f)\bigr)=\sum_{c\in\CC}\dim\bH^{k-1}(f^{-1}(c)^\an,\phi_{f-c}\CC_{U^\an})\quad\forall k.
\]
\end{proposition}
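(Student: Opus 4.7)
Setting $A^k:=\bH^k(U,(\Omega_U^\cbbullet\lcr u\rcr,u\rd+\rd f))$ and $B^k:=\bH^k(U,(\Omega_U^\cbbullet,\rd f))$, I would first use the hypothesis that the critical set is compact to see that the coherent complex $(\Omega_U^\cbbullet,\rd f)$ has cohomology sheaves with compact support in $U$, so $\dim_\CC B^k<\infty$. From the short exact sequence of complexes on $U$
\[
0\to(\Omega_U^\cbbullet[u]/u^{\ell-1},u\rd+\rd f)\xrightarrow{u}(\Omega_U^\cbbullet[u]/u^\ell,u\rd+\rd f)\to(\Omega_U^\cbbullet,\rd f)\to0,
\]
an induction on $\ell$ shows that each $A^k_\ell:=\bH^k(U,(\Omega_U^\cbbullet[u]/u^\ell,u\rd+\rd f))$ is finite-dimensional over $\CC$, with $\dim_\CC A^k_\ell\le\ell\dim_\CC B^k$.

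For finite type over $\CC\lcr u\rcr$, the finite-dimensionality of each $A^k_\ell$ makes the inverse system $\{A^k_\ell\}_\ell$ Mittag-Leffler, and the Milnor $\varprojlim^1$ exact sequence yields $A^k=\varprojlim_\ell A^k_\ell$; in particular $A^k$ is $u$-adically complete. The long exact sequence associated to $0\to\Omega_U^\cbbullet\lcr u\rcr\xrightarrow{u}\Omega_U^\cbbullet\lcr u\rcr\to\Omega_U^\cbbullet\to0$ exhibits $A^k/uA^k$ as a subspace of $B^k$, hence as a finite-dimensional $\CC$-vector space; lifting a basis of $A^k/uA^k$ to $A^k$ and invoking $u$-adic convergence (a formal Nakayama argument) produces finitely many generators of $A^k$ over $\CC\lcr u\rcr$.

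For the freeness criterion, the long exact sequence above gives for each $k$
\[
\dim_\CC(A^k/uA^k)+\dim_\CC A^{k+1}[u]=\dim_\CC B^k,
\]
where $A^{k+1}[u]$ denotes the $u$-torsion. Combined with the structure identity $\dim_\CC(A^k/uA^k)=\rk A^k+\dim_\CC A^k[u]$ valid for finite type modules over the DVR $\CC\lcr u\rcr$, this yields
\[
\rk A^k+\dim_\CC A^k[u]+\dim_\CC A^{k+1}[u]=\dim_\CC B^k.
\]
The generic rank $\rk A^k$ is identified with $V^k:=\sum_c\dim\bH^{k-1}(f^{-1}(c)^\an,\phi_{f-c}\CC_{U^\an})$ by interpreting the localization $A^k\otimes_{\CC\lcr u\rcr}\CC\lpr u\rpr\simeq\bH^k(U,(\Omega_U^\cbbullet\lpr u\rpr,u\rd+\rd f))$ through the comparison between twisted algebraic de~Rham cohomology and Betti cohomology of $U^\an$ with exponential coefficients, whose dimension at generic $u$ is given by the vanishing cycle formula. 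Altogether, freeness of all $A^k$ (equivalently $A^k[u]=0$ for every $k$) is equivalent to $\rk A^k=\dim_\CC B^k$ for all $k$, i.e.\ to the dimension equality $V^k=\dim_\CC B^k$ stated in the proposition.

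The principal obstacle is the formal Nakayama step in the second paragraph: the naive description $A^k=\varprojlim A^k_\ell$ controls $A^k$ itself but not directly the tower $\{A^k/u^\ell A^k\}_\ell$, so establishing finite type requires a careful comparison of the two inverse systems $\{A^k/u^\ell A^k\}_\ell$ and $\{A^k_\ell\}_\ell$ via the long exact sequences associated to $0\to\Omega_U^\cbbullet\lcr u\rcr\xrightarrow{u^\ell}\Omega_U^\cbbullet\lcr u\rcr\to\Omega_U^\cbbullet[u]/u^\ell\to0$ before Nakayama can be pushed through cleanly. A secondary subtlety is pinning down the generic rank as precisely $V^k$ with $\CC_{U^\an}$ (rather than with $\bR j_{Y*}\CC_U$ on a projectivization), reflecting that $A^k$ depends only on the formal neighborhood of $u=0$ and therefore only sees the critical locus inside $U$.
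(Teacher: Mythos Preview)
Your freeness criterion is essentially the paper's argument repackaged: the paper uses the DVR criterion $\dim_\CC M/uM=\dim_{\CC\lpr u\rpr}M[u^{-1}]$ together with the identification of the generic rank via \cite[Th.\,1.1]{Bibi10b} and \cite{S-MS12}, and then runs a decreasing induction on $k$; your single numerical identity $\rk A^k+\dim A^k[u]+\dim A^{k+1}[u]=\dim B^k$ encodes the same information. Your secondary subtlety is not an issue: the cited references compute $\dim_{\CC\lpr u\rpr}\bH^k(U,(\Omega_U^\cbbullet\lpr u\rpr,u\rd+\rd f))$ directly as $\sum_c\dim\bH^{k-1}(f^{-1}(c)^\an,\phi_{f-c}\CC_{U^\an})$, with $\CC_{U^\an}$ rather than a pushforward to a compactification.

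Your finiteness argument, on the other hand, is genuinely different and more elementary. The paper passes to the analytic side (Lemmas \ref{lem:restr}--\ref{lem:Rj*}), invokes the Barlet--Saito structure theorem (Lemma \ref{lem:BS}, from \cite{B-S07}) to bound the $u$-torsion order of the local cohomology \emph{sheaves} $\cH^j(\Omega_{U^\an}^\cbbullet\lcr u\rcr,u\rd+\rd f)$ uniformly on the compact critical set, and concludes via the hypercohomology spectral sequence together with the exact sequence for multiplication by $u^N$. You stay on the algebraic side and argue through the tower modulo $u^\ell$. What your route buys is independence from the deep local input of \cite{B-S07}; what the paper's route buys is a more geometric explanation, with the torsion bound descending from the sheaf level.

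Your flagged obstacle resolves cleanly. The transition maps $\Omega_U^j[u]/u^\ell\to\Omega_U^j[u]/u^{\ell-1}$ are \emph{split} surjections of sheaves, so the tower has vanishing higher derived inverse limits; since $\bR\Gamma$ preserves homotopy limits, the Milnor sequence applies and (with Mittag--Leffler for the finite-dimensional tower $\{A^{k-1}_\ell\}$) gives $A^k=\varprojlim_\ell A^k_\ell$. Now the long exact sequence for $0\to\Omega_U^\cbbullet\lcr u\rcr\xrightarrow{u^m}\Omega_U^\cbbullet\lcr u\rcr\to\Omega_U^\cbbullet[u]/u^m\to0$ furnishes injections $A^k/u^mA^k\hookrightarrow A^k_m$ compatible with the natural projections $A^k\to A^k_m$; combined with $A^k=\varprojlim_m A^k_m$ one checks directly that $A^k\to\varprojlim_m A^k/u^mA^k$ is an isomorphism, so $A^k$ is genuinely $u$-adically complete and separated, and your formal Nakayama step is justified.
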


Note that we do not assert the existence of an isomorphism similar to that of Proposition \ref{prop:formalization} for $\bH^k\bigl(U,(\Omega_U^\cbbullet\lcr u\rcr,u\rd+\rd f)\bigr)$, since we do not know whether the assumption of finite-dimensionality of $\bH^k\bigl(U,(\Omega_U^\cbbullet,\rd f)\bigr)$ is enough to ensure that $\bH^k\bigl(U,(\Omega_U^\cbbullet\lcr u\rcr,u\rd+\rd f)\bigr)$ is of finite type over $\CC[u]$. We~first review some results of \cite{B-S07,Bibi10b,S-MS12}.

\begin{lemma}\label{lem:restr}
For each $k$, the natural restriction morphism
\[
\bH^k\bigl(X,(\Omega_X^\cbbullet(*D)\lcr u\rcr,u\rd+\rd f)\bigr)\to\bH^k\bigl(U,(\Omega_U^\cbbullet\lcr u\rcr,u\rd+\rd f)\bigr)
\]
is an isomorphism.
\end{lemma}

\begin{proof}
By a spectral sequence argument, it is enough to prove that for any $j,k$, the restriction morphism
\[
H^k\bigl(X,\Omega_X^j(*D)\lcr u\rcr\bigr)\to H^k\bigl(U,\Omega_U^j\lcr u\rcr\bigr)
\]
is an isomorphism. It follows from \cite[\S4]{Hartshorne75}, as noted in \cite[\S2.a]{Bibi10b}, that in such cases, $\lcr u\rcr$ commutes with taking cohomology. Then the assertion is clear.\qed
\end{proof}

\begin{lemma}[Algebraic/analytic comparison]\label{lem:GAGA}
For each $k$, the natural morphism
\[
\bH^k\bigl(X,(\Omega_X^\cbbullet(*D)\lcr u\rcr,u\rd+\rd f)\bigr)\to\bH^k\bigl(X^\an,(\Omega_{X^\an}^\cbbullet(*D)\lcr u\rcr,u\rd+\rd f)\bigr)
\]
is an isomorphism.
\end{lemma}

\begin{proof}
We consider the morphism between the spectral sequences associated to the filtration of the de~Rham complexes by the stupid truncation. The morphism at the~$E_1$ level is
\[
\bH^k\bigl(X,\Omega_X^j(*D)\lcr u\rcr\bigr)\to\bH^k\bigl(X^\an,\Omega_{X^\an}^j(*D)\lcr u\rcr\bigr).
\]
By \cite[(2.3)]{Bibi10b}, this morphism is an isomorphism, which implies the lemma by a spectral sequence argument.\qed
\end{proof}

\begin{lemma}\label{lem:Rj*}
The natural morphism of complexes
\[
(\Omega_{X^\an}^\cbbullet(*D)\lcr u\rcr,u\rd+\rd f)\to\bR j_*(\Omega_{U^\an}^\cbbullet\lcr u\rcr,u\rd+\rd f)
\]
is a quasi-isomorphism.
\end{lemma}

\begin{proof}
Although the result of \cite[Prop.\,4.1]{Bibi10b} is stated for $\lpr u\rpr$ instead of $\lcr u\rcr$, one can follow its proof with $\lcr u\rcr$ instead of $\lpr u\rpr$.\qed
\end{proof}

\begin{lemma}\label{lem:BS}
The order of the $u$-torsion of each cohomology sheaf
\[
\cH^j(\Omega_{U^\an}^\cbbullet\lcr u\rcr,u\rd\!+\!\rd f)
\]
is locally bounded and, modulo its torsion, this sheaf is a constructible sheaf of $\CC\lcr u\rcr$\nobreakdash-modules of finite type.
\end{lemma}

\begin{proof}
This is essentially \cite[Th.\,1]{B-S07} if we work with the isomorphic complex $(u^{-\cbbullet}\Omega_U^\cbbullet[u],\rd+\rd f/u)$. Let us give details on the reduction to \loccit\ In \cite[(1.5.2)]{S-MS12}, two complexes are considered, with the notation $\partial_t^{-1}$ for our notation $u$ (and the function $-f$ is considered, instead of $f$). One is $F_0=(u^{-\cbbullet}\Omega_U^\cbbullet[u],\rd+\rd f/u)$ and for each $\ell\geq0$, the subcomplex $F_\ell$ with terms $u^\ell(u^{-\cbbullet}\Omega_U^\cbbullet[u])$. The other one is $G_0$ and similarly $G_\ell$, with inclusions $u^{\ell+1}F_0\subset u^\ell G_0\subset u^\ell F_0$ for each $\ell\geq0$, so that $F_0/G_0$ has $u$-torsion of order one. The result of \cite[Th.\,1]{B-S07} together with the identification of \cite[(1.5.5)]{S-MS12} implies the assertion of the lemma for the complex $\varprojlim_\ell(G_0/G_\ell)$. On~the other hand, the complex occurring in the lemma is $\varprojlim_\ell(F_0/F_\ell)$, so the inclusions above yield the statement of the lemma.\qed
\end{proof}

\begin{proof}[of Proposition \ref{prop:C}]
We start with the finiteness statement. According to Lemmas \ref{lem:restr}--\ref{lem:Rj*}, we are reduced to proving the $\CC\lcr u\rcr$-finiteness of \hbox{$\bH^k\bigl(U^\an,(\Omega_{U^\an}^\cbbullet\lcr u\rcr,u\rd+\rd f)\bigr)$} for each~$k$. 

The complex $(\Omega_{U^\an}^\cbbullet\lcr u\rcr,u\rd+\rd f)$ is supported on the critical set of \hbox{$f:U\to\Afu$}: indeed, this follows from the fact that, on the product $\Delta\times V^\an$ of an open disc $\Delta$ with coordinate $x$ by a complex manifold $V^\an$, the morphism
\[
\cO(\Delta\times V^\an)\lcr u\rcr\To{u\partial_x+1}\cO(\Delta\times V^\an)\lcr u\rcr
\]
is an isomorphism, which is easily checked. Since the critical set of $f$ is compact by assumption, we conclude from Lemma \ref{lem:BS} that the order of the $u$\nobreakdash-tor\-sion of each cohomology sheaf is bounded, say by $N$.

We consider the spectral sequence with
\[
E_2^{p,q}=H^p\bigl(U^\an,\cH^q(\Omega_{U^\an}^\cbbullet\lcr u\rcr,u\rd+\rd f)\bigr).
\]
Since it can be realized as the spectral sequence of a bounded double complex, it~converges at a finite step. Let $\cT^q$ be the $\CC\lcr u\rcr$-torsion of $\cH^q:=\cH^q(\Omega_{U^\an}^\cbbullet\lcr u\rcr,u\rd+\rd f)$ and $\cI^q$ the quotient $\cH^q/\cT^q$. Since $\cH^q$ is supported on a compact set, Lemma \ref{lem:BS} implies that $H^p(U^\an,\cI^q)$ has finite type over $\CC\lcr u\rcr$ and $H^p(U^\an,\cT^q)$ is of $N$\nobreakdash-torsion. Therefore, $E_2^{p,q}$ has $\CC\lcr u\rcr$-torsion of order bounded by $N$ and its quotient by torsion has finite type over $\CC\lcr u\rcr$. This property goes through the spectral sequence, and we conclude that it holds for $\bH^k\bigl(U^\an,(\Omega_{U^\an}^\cbbullet\lcr u\rcr,u\rd+\rd f)\bigr)$.

On the other hand, the quotient complex $(\Omega_{U^\an}^\cbbullet\lcr u\rcr/u^N\Omega_{U^\an}^\cbbullet\lcr u\rcr,u\rd+\rd f)$ has a finite filtration $F^\cbbullet$ whose graded terms are all isomorphic to the complex $(\Omega_{U^\an}^\cbbullet,\rd f)$. The latter is a complex in $\Mod_{\coh}(\cO_{U^\an})$ supported on the compact critical set of~$f$. Therefore, $\bH^k\bigl(U^\an,\gr_F(\Omega_{U^\an}^\cbbullet\lcr u\rcr/u^N\Omega_{U^\an}^\cbbullet\lcr u\rcr,u\rd+\rd f)\bigr)$ is finite dimensional, and~so~is $\bH^k\bigl(U^\an,(\Omega_{U^\an}^\cbbullet\lcr u\rcr/u^N\Omega_{U^\an}^\cbbullet\lcr u\rcr,u\rd+\rd f)\bigr)$. We~conclude the proof of the first statement by considering the hypercohomology exact sequence deduced from the exact sequence
\begin{multline*}
0\to(\Omega_{U^\an}^\cbbullet\lcr u\rcr,u\rd+\rd f)\xrightarrow{~\dpl u^N}(\Omega_{U^\an}^\cbbullet\lcr u\rcr,u\rd+\rd f)\\\to(\Omega_{U^\an}^\cbbullet\lcr u\rcr/u^N\Omega_{U^\an}^\cbbullet\lcr u\rcr,u\rd+\rd f)\to0.
\end{multline*}

We now consider the second statement. Since $\CC\lpr u\rpr$ is $\CC\lcr u\rcr$-flat, tensoring with $\CC\lpr u\rpr$ commutes with taking cohomology, and we have
\begin{align*}
\bH^k\bigl(U,(\Omega_U^\cbbullet\lpr u\rpr,u\rd+\rd f)\bigr)&=\CC\lpr u\rpr\otimes_{\CC\lcr u\rcr}\bH^k\bigl(U,(\Omega_U^\cbbullet\lcr u\rcr,u\rd+\rd f)\bigr)\\
&=\bH^k\bigl(U,(\Omega_U^\cbbullet\lcr u\rcr,u\rd+\rd f)\bigr)[u^{-1}].
\end{align*}
Indeed, this is seen first for each $H^k(U,\Omega_U^j\lpr u\rpr)$ by Noetherianity of $U$, and then deduced for $\bH^k\bigl(U,(\Omega_U^\cbbullet\lpr u\rpr,u\rd+\rd f)\bigr)$ by a spectral sequence argument already used.

We now apply the property that, for a $\CC\lcr u\rcr$-module of finite type $M$, $M$ is $\CC\lcr u\rcr$-free if and only if
\[
\dim_{\CC} M/uM=\dim_{\CC\lpr u\rpr}M[u^{-1}].
\]
Set $M^k=\bH^k\bigl(U,(\Omega_U^\cbbullet\lcr u\rcr,u\rd+\rd f)\bigr)$. We~argue by induction on the length of the long exact sequence of hypercohomology associated with the short exact sequence
\[
0\to(\Omega_U^\cbbullet\lcr u\rcr,u\rd+\rd f)\To{u}(\Omega_U^\cbbullet\lcr u\rcr,u\rd+\rd f)\to(\Omega_U^\cbbullet,\rd f)\to0.
\]

Let $k$ be such that $M^j=0$ for $j>k$. Then $M^k/uM^k=\bH^k\bigl(U,(\Omega_U^\cbbullet,\rd f)\bigr)$. On~the other hand, $\dim_{\CC\lpr u\rpr}M^k[u^{-1}]=\dim\sum_{c\in\CC}\dim\bH^{k-1}(f^{-1}(c)^\an,\phi_{f-c}\CC_{U^\an})$ by the main theorem of \cite{S-MS12} (\cf also \cite[Th.\,1.1]{Bibi10b}). We~conclude that $M^k$ is $\CC\lcr u\rcr$-free if and only if both dimensions are equal. In such a case, $u:M^k\to M^k$ is injective. We~can thus truncate the long exact sequence mentioned above after $k-1$ and we conclude by decreasing induction on $k$.\qed
\end{proof}

\begin{proof}[of the first part of Theorem \ref{th:Ch}]
In view of Proposition \ref{prop:C}, we only need to show the equality of dimensions occurring in that proposition under Condition \eqref{cond:b}. This is precisely \cite[Th.\,2]{Bibi97b}.\qed
\end{proof}

\begin{proof}[of the second part of Theorem \ref{th:Ch}]
In view of Theorem \ref{th:Ah}, it is enough to prove that the natural morphisms
\begin{multline*}
\bH^k\bigl(X,(\Omega^\cbbullet_f(-D)\lcr u\rcr,u\rd+\rd f)\bigr)\to\bH^k\bigl(X,(\Omega^\cbbullet_f\lcr u\rcr,u\rd+\rd f)\bigr)\\\to\bH^k\bigl(X,(\Omega_X^\cbbullet(*D)\lcr u\rcr,u\rd+\rd f)\bigr)
\end{multline*}
are isomorphisms. Together with Lemma \ref{lem:restr}, this shows the correspondence with the $\CC\lcr u\rcr$-modules of Theorem \ref{th:Ah}. These are free $\CC\lcr u\rcr$-modules of finite rank, according to the first part of Theorem \ref{th:Ch}. It is thus enough to prove this modulo $u\CC\lcr u\rcr$. We~are left with the morphisms
\[
\bH^k\bigl(X,(\Omega^\cbbullet_f(-D),\rd f)\bigr)\to\bH^k\bigl(X,(\Omega^\cbbullet_f,\rd f)\bigr)\to\bH^k\bigl(X,(\Omega_X^\cbbullet(*D),\rd f)\bigr).
\]
Since $X$ is compact, we can replace the complexes which one takes hypercohomology of by their analytic counterpart on $X^\an$ by GAGA. These analytic complexes are supported on the critical set of $f$, which is contained in $U$, hence they coincide in a neighbourhood of this set. The assertion follows.\qed
\end{proof}

\section{Geometry}\label{sec:geometry}

Let $\Afnp$ be the affine chart with coordinates $(x_0,\dots,x_n)$ in $\PP^{n+1}$ with complement~$\PP^n_\infty$, and let $\varpi:\cswt\PP^{n+1}\to\PP^{n+1}$ be the blow-up of $\PP^{n+1}$ at the origin, with exceptional divisor $\varpi^{-1}(0)=\PP^n$. We~identify $\varpi^{-1}(\Afnp)$ with the total space $\Tot(\cO_{\PP^n}(-1))$ and $\cswt\PP^{n+1}$ to $\PP(\cO_{\PP^n}(-1)\oplus\bun)$.

For $d\geq1$, we consider the action of $\mu_d$ on $\Afnp$ by $x\mto\zeta x$ ($\zeta\in\mu_d$). This action lifts to $\cswt\PP^{n+1}$. We~note that $\mu_d$ acts trivially on $\varpi^{-1}(0)$ and $\PP^n_\infty$. Moreover,
\[
\Tot(\cO_{\PP^n}(-1))/\mu_d\simeq\Tot(\cO_{\PP^n}(-d)),\qquad\PP(\cO_{\PP^n}(-1)\oplus\bun)/\mu_d\simeq \PP(\cO_{\PP^n}(-d)\oplus\bun).
\]
The quotient space $\PP^{n+1}/\mu_d$ is smooth away from the origin. We~have a commutative diagram
\[
\xymatrix{
\PP(\cO_{\PP^n}(-1)\oplus\bun)\ar[r]^-\varpi\ar[d]_{\cswt\rho_d}&\PP^{n+1}\ar[d]^{\rho_d}\\
\PP(\cO_{\PP^n}(-d)\oplus\bun)\ar[r]^-{\varpi_d}&\PP^{n+1}/\mu_d
}
\]

Let $f\in\CC[x_0,\dots,x_n]$ be a homogeneous polynomial of degree $d$, such that $f^{-1}(0)$ has an isolated singularity at the origin. Let $V\subset\PP^n$ denote the smooth hypersurface defined by $f$ and let $X$ denote the closure of the graph $\{t-\nobreak f(x)=\nobreak0\}\subset \CC^{n+1}\times\Afu_t$ in $\PP^{n+1}\times\Afu_t$. Since $f$ is invariant by $\mu_d$, it descends as a regular function $f_d$ on $\CC^{n+1}/\mu_d$ whose graph in \hbox{$(\CC^{n+1}/\mu_d)\times\Afu_t$} has closure \hbox{$X_d:=X/\mu_d\subset(\PP^{n+1}/\mu_d)\times\Afu_t$}. Similarly $\cswt f:=f\circ\varpi$ descends as a function on $\Tot(\cO_{\PP^n}(-d))$ whose graph has closure $\cswt X_d$ in $\PP(\cO_{\PP^n}(-d)\oplus\bun)\times\Afu_t$. There is a natural proper modification which is an isomorphism away from the origin:
\[
\pi_d:\cswt X_d\to X_d.
\]

\begin{lemma}
The space $X_d$ is smooth away from the origin, and the projection \hbox{$p:X_d\to\Afu_t$} is smooth away from the origin.\qed
\end{lemma}

As a consequence, the composition $\cswt f_d:\cswt X_d\to\Afu_t$ is smooth away from $\pi_d^{-1}(0)\simeq\nobreak\PP^n$.

\begin{lemma}
The critical fiber $\cswt f_d^{-1}(0)$ is a reduced divisor with two components, one being $\pi_d^{-1}(0)\simeq\PP^n$, intersecting normally along the smooth projective variety \hbox{$V\subset\pi_d^{-1}(0)$}.\qed
\end{lemma}

As a consequence, the vanishing cycle complex $\phi_{\cswt f_d}\CC_{\cswt X_d}$ is a complex of sheaves supported on $V$ and has cohomology in degree $1$ only. Moreover, $\cH^1\phi_{\cswt f_d}\CC_{\cswt X_d}$ is a local system of rank one, which is constant if $n\geq3$, since $V$ is $1$-connected.

We apply the results of Section \ref{sec:Kontsevich} to $Y=\cswt X_d$ and $\cswt f_d:\cswt X_d\to\Afu_t$. By Theorem~\ref{th:A}, $H^k(\cswt X_d,(\Omega_{\cswt X_d}^\cbbullet[u],u\rd+\rd\cswt f_d))$ is $\CC[u]$-free of finite rank. Its rank is given by the computation of Remark \ref{rem:computrk}, that is,
\[
\dim \bH^{k-1}(V,\phi_{\cswt f_d}\CC_{\cswt X_d})=\dim \bH^{k-1}(V,\cH^1\phi_{\cswt f_d}\CC_{\cswt X_d}[-1])=\dim H^{k-2}(V,\CC).
\]

\begin{proposition}
The function $\cswt f_d:Y=\cswt X_d\to\Afu_t$ together with $U:=\Tot(\cO_{\PP^n}(-d))$ and $H=Y\moins U$ satisfies the tameness property of Example \ref{ex:tame}.
\end{proposition}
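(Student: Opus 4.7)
The plan is to verify directly the two hypotheses of Example \ref{ex:tame} for the triple $(Y,U,H)=(\wt X_d,\Tot(\cO_{\PP^n}(-d)),\wt X_d\smallsetminus U)$, namely that $H$ is smooth and that $\wt f_d|_H$ is smooth. Once both are in place, Example \ref{ex:tame} immediately supplies the Katz-tameness together with the conditions of Theorem \ref{th:Ch}.

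The key step is a local description of $\wt X_d$ near $H$, which lies above the section at infinity $\PP^n_\infty\subset\PP(\cO_{\PP^n}(-d)\oplus\bun)$. First I would fix an affine chart $U_\alpha\subset\PP^n$ with coordinates $y=(y_1,\dots,y_n)$ (say $y_0=1$), let $\sigma$ denote the usual fiber coordinate on $\Tot(\cO_{\PP^n}(-d))|_{U_\alpha}$, and $\sigma'=1/\sigma$ the dual coordinate cutting out the section at infinity. Since $f$ is homogeneous of degree $d$, on $\Tot(\cO_{\PP^n}(-d))|_{U_\alpha}$ the function $\wt f_d$ reads $\sigma\, g_\alpha(y)$, with $g_\alpha(y)=f(1,y_1,\dots,y_n)$. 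Multiplying the graph equation $t=\sigma g_\alpha(y)$ by $\sigma'$ shows that the closure $\wt X_d$ is locally cut out by the single equation
\[
t\sigma'=g_\alpha(y).
\]

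From this both assertions fall out. Setting $\sigma'=0$ in the defining equation forces $g_\alpha(y)=0$ and leaves $t$ free, so in each chart $H\cong (V\cap U_\alpha)\times\Afu_t$, and these glue to $H\cong V\times\Afu_t$; since $V\subset\PP^n$ is smooth by the isolated singularity assumption on $f^{-1}(0)$, $H$ is smooth. The map $\wt f_d|_H$ is then simply the projection $V\times\Afu_t\to\Afu_t$, which is smooth. As a byproduct one also sees that $\wt X_d$ itself is smooth along $H$: the partial derivatives of $t\sigma'-g_\alpha(y)$ with respect to $t$, $\sigma'$, $y_i$ are $\sigma',\,t,\,-\partial_{y_i}g_\alpha$, and these cannot vanish simultaneously at a point with $\sigma'=0$ and $g_\alpha(y)=0$ without contradicting the smoothness of $V$.

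There is no real obstacle; the only point requiring care is to keep track of the two fiber coordinates $\sigma,\sigma'$ and to write out the equation of the closure at infinity correctly. Once the local equation $t\sigma'=g_\alpha(y)$ is in hand, the tameness criterion of Example \ref{ex:tame} applies verbatim.
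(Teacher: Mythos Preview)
Your proof is correct. You verify the two hypotheses of Example \ref{ex:tame} by writing down the local equation $t\sigma'=g_\alpha(y)$ for $\wt X_d$ in the chart at infinity of the projective bundle, from which $H\simeq V\times\Afu_t$ and the smoothness of $\wt f_d|_H$ follow at once.

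The paper reaches the same identification $H\simeq V\times\Afu_t$ by a different tactical route: it first observes that $\wt X_d$ and $X_d$ coincide in a neighbourhood of $H$ (since the modification $\pi_d:\wt X_d\to X_d$ is an isomorphism away from the origin), then reduces from $X_d=X/\mu_d$ to $X$ itself by noting that $\mu_d$ acts trivially on $\PP^n_\infty$, and finally reads off $X\cap(\PP^n_\infty\times\Afu_t)=V\times\Afu_t$ directly from the definition of $X$ as the closure of the graph of $f$. The paper's route avoids any coordinate calculation by leaning on the structural reductions set up earlier in Section \ref{sec:geometry}; your route is more self-contained and has the side benefit of confirming explicitly that $\wt X_d$ is smooth along $H$.
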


\begin{proof}
Since $\cswt X_d$ and $X_d$ are equal in the neighbourhood of $H$, it is enough to work with $X_d$ and $f_d$. Recall that $X$ denotes the closure of the graph of $f$ in $\PP^{n+1}\times\Afu_t$. Then $X\moins(\Afnp\times\Afu_t)=X\cap(\PP^n_\infty\times\Afu_t)$ is the product $V\times\Afu_t$ and the restriction of~$f$ to it is simply the projection. It is thus obviously smooth, and the same property remains true after taking the quotient by $\mu_d$, since $\mu_d$ acts as the identity on $\PP^n_\infty$.\qed
\end{proof}

We conclude that Corollaries \ref{cor:tame}, \ref{cor:tameb} and Theorem \ref{th:Ch} apply to \hbox{$\cswt f_d:Y=\cswt X_d\to\Afu_t$}.

\subsubsection*{Acknowledgements}
This work grew out from discussions with Bumsig Kim during my visit at KIAS on February 2019. It was supposed to be a starting point to understanding some questions related to gauged linear sigma models like the one described in \cite[Ex.\,2.4]{F-K20}. It was Bumsig Kim who insisted to make precise the comparison between the various dualities occurring in this context. I would like to thank KIAS for the excellent working conditions during this visit. I thank Jeng-Daw Yu for explaining some parts of his article \cite{Yu12} and for his comments, and the referee for noticing a mistake in the original proof of Proposition \ref{prop:C} and providing a correction.

\bibliographystyle{spmpsci}
\bibliography{sabbah_duality-kim}
\end{document}